\documentclass[11pt]{amsart}
\usepackage{amsmath}
\usepackage{amsfonts}
\usepackage{amssymb}
\usepackage{color}
\usepackage{enumerate}
\usepackage{graphicx}
\usepackage{hyperref}

\newtheorem{thm}{Theorem}

\newtheorem{cor}[thm]{Corollary}

\newtheorem{remark}[thm]{Remark}
\newtheorem{lemma}[thm]{Lemma}
\newtheorem{prop}[thm]{Proposition}

\newcommand{\Tr}{{\rm Tr}}
\newcommand{\bb}[1]{\mathbb{#1}}
\newcommand{\cl}[1]{\mathcal{#1}}

\oddsidemargin 0.25in
\evensidemargin 0.25in
\textwidth 6in

\newcommand{\mnorm}[1]{%
\left\vert\kern-0.9pt\left\vert\kern-0.9pt\left\vert #1
\right\vert\kern-0.9pt\right\vert\kern-0.9pt\right\vert}

\newcommand{\bigmnorm}[1]{%
\big\vert\kern-0.9pt\big\vert\kern-0.9pt\big\vert #1
\big\vert\kern-0.9pt\big\vert\kern-0.9pt\big\vert}

\title[Mapping Cones are Operator Systems]{Mapping Cones are Operator Systems}

\author[N.~Johnston]{Nathaniel Johnston}
\address{Department of Mathematics and Statistics, University of Guelph, Guelph, Ontario N1G 2W1, Canada}
\email{njohns01@uoguelph.ca}

\author[E.~St\o rmer]{Erling St\o rmer}
\address{Department of Mathematics, University of Oslo, P.O. Box 1053 Blindern, NO-0316 Oslo, Norway}
\email{erlings@math.uio.no}

\begin{document}

\begin{abstract}
We investigate the relationship between mapping cones and matrix ordered $*$-vector spaces (i.e., abstract operator systems). We show that to every mapping cone there is an associated operator system on the space of $n$-by-$n$ complex matrices, and furthermore we show that the associated operator system is unique and has a certain homogeneity property. Conversely, we show that the cone of completely positive maps on any operator system with that homogeneity property is a mapping cone. We also consider several related problems, such as characterizing cones that are closed under composition on the right by completely positive maps, and cones that are also semigroups, in terms of operator systems.\medskip

\noindent {\bf Keywords:} operator systems, mapping cones, dual cones, positive maps\medskip

\noindent {\bf AMS Subject Classifications:} 15B48, 47D03, 47L99
\end{abstract}

\maketitle

\section{Introduction}

In operator theory, some of the most important families of linear maps are the positive and $k$-positive maps, and their dual cones \cite{SSZ09} of superpositive and $k$-superpositive maps. These sets of maps are all specific examples of mapping cones \cite{S86}, which are closed cones of positive maps that are invariant under left and right composition by completely positive maps -- a property of $k$-positive and $k$-superpositive maps that is easily verified.

It has recently been shown \cite{PTT10,Xthesis,JKPP11} that the $k$-positive and $k$-superpositive maps can be seen as the completely positive maps on certain natural operator system structures. We thus have two settings, seemingly very different, that give rise to the familiar cones of $k$-positive and $k$-superpositive maps. A natural question that arises is whether this is simply coincidence, or if there is indeed a fundamental link between mapping cones and operator systems.

In this work, we show that there is indeed an extremely strong connection between mapping cones and operator systems. In fact, we show that there is a bijection between mapping cones and operator systems with a property that we refer to as super-homogeneity. If we remove the super-homogeneity property, then the bijection is no longer with mapping cones but rather with cones that are only closed under right (but not necessarily left) composition by completely positive maps. We also answer some related questions involving semigroup cones of positive maps.

The paper is organized as follows. In Section~\ref{sec:cone_prelims} we introduce much of our notation and present the basics of cones of linear maps on complex matrices. In Section~\ref{sec:os_prelims} we present abstract operator systems and derive a simple uniqueness property in the finite-dimensional setting that we are interested in. In Section~\ref{sec:opSysRight} we present and prove our most general result for right-CP-invariant cones, which shows their intimate link with operator systems, and in Section~\ref{sec:opSys} we investigate the special case of mapping cones and the kinds of operator systems that they give rise to. We close in Section~\ref{sec:semigroups} by exploring some properties of mapping cones that have the additional property of being semigroups, and we see that they too can be seen as arising from operator systems.

\section{Cones of Positive Maps}\label{sec:cone_prelims}

If $\cl{H}$ is a (finite-dimensional) Hilbert space and $\cl{L}(\cl{H})$ is the space of linear maps on $\cl{H}$, then a map $\Phi : \cl{L}(\cl{H}) \rightarrow \cl{L}(\cl{H})$ is said to be \emph{positive} if $\Phi(X) \in \cl{L}(\cl{H})^{+}$ whenever $X \in \cl{L}(\cl{H})^{+}$, \emph{$k$-positive} if $id_k \otimes \Phi$ is positive, and \emph{completely positive} if $\Phi$ is $k$-positive for all $k \in \bb{N}$. If $A \in \cl{L}(\cl{H})$ then the map ${\rm Ad}_A : \cl{L}(\cl{H}) \rightarrow \cl{L}(\cl{H})$ defined by ${\rm Ad}_A(X) \equiv A^* XA$ is completely positive, and conversely every completely positive map can be written as a sum of maps of the form ${\rm Ad}_{A_i}$ for some $\{A_i\} \in \cl{L}(\cl{H})$ \cite{NC00,C75}.

Given a fixed orthonormal basis $\{{\mathbf e_i}\}_{i=1}^n$ of $\cl{H}$, the \emph{Choi-Jamio\l kowski isomorphism} \cite{J72} associates a linear map $\Phi : \cl{L}(\cl{H}) \rightarrow \cl{L}(\cl{H})$ with the operator $C_{\Phi} := (id_n \otimes \Phi)(E) \in \cl{L}(\cl{H}) \otimes \cl{L}(\cl{H})$, where $E := \sum_{i,j=1}^n {\mathbf e_i}{\mathbf e_j}^* \otimes {\mathbf e_i}{\mathbf e_j}^*$ (${\mathbf e_j}^*$ is the dual vector of ${\mathbf e_j}$, ${\mathbf e_i}{\mathbf e_j}^*$ is the outer product of ${\mathbf e_i}$ and ${\mathbf e_j}$, and $C_{\Phi}$ is called the \emph{Choi matrix} of $\Phi$). For us, it will be useful to know that $\Phi$ is completely positive if and only if $C_{\Phi}$ is positive, and $\Phi$ is positive if and only if $C_{\Phi}$ is \emph{block-positive} -- i.e., $({\mathbf v}^* \otimes {\mathbf w}^*)C_{\Phi}({\mathbf v} \otimes {\mathbf w}) \geq 0$ for all ${\mathbf v},{\mathbf w} \in \cl{H}$. Given a cone of positive maps $\cl{C}$, we define $C_{\cl{C}} := \{ C_{\Phi} : \Phi \in \cl{C} \}$ and $\cl{C}^{\dagger} := \{ \Phi^{\dagger} : \Phi \in \cl{C} \}$, where $\Phi^{\dagger} : \cl{L}(\cl{H}) \rightarrow \cl{L}(\cl{H})$ is the adjoint map defined via the Hilbert-Schmidt inner product so that $\Tr(\Phi(X)Y) = \Tr(X\Phi^{\dagger}(Y))$ for all $X,Y \in \cl{L}(\cl{H})$.

A \emph{mapping cone} \cite{S86} is a nonzero closed cone $\cl{C}$ of positive maps from $\cl{L}(\cl{H})$ into itself with the property that $\Phi \circ \Omega \circ \Psi \in \cl{C}$ whenever $\Omega \in \cl{C}$ and $\Phi,\Psi : \cl{L}(\cl{H}) \rightarrow \cl{L}(\cl{H})$ are completely positive. For the remainder of this work, we will generally assume all cones to be convex, though we will still specify if the distinction is important or there is the possibility of confusion. By linearity, it is enough that ${\rm Ad}_A \circ \Omega \circ {\rm Ad}_B \in \cl{C}$ whenever $\Omega \in \cl{C}$ and $A,B \in \cl{L}(\cl{H})$ for a convex cone $\cl{C}$ to be a mapping cone. It will also sometimes be useful for us to consider (not necessarily closed) cones $\cl{C}$ such that $\Omega \circ \Psi \in \cl{C}$ whenever $\Omega \in \cl{C}$ and $\Psi : \cl{L}(\cl{H}) \rightarrow \cl{L}(\cl{H})$ is completely positive -- that is, cones that are closed under right-composition, but not necessarily left-composition, by completely positive maps. We will call such cones \emph{right-CP-invariant}. Left-CP-invariant cones can be defined analogously, and it is clear that $\cl{C}$ is right-CP-invariant if and only if $\cl{C}^{\dagger}$ is left-CP-invariant.

The \emph{dual cone} $\cl{C}^{\circ}$ of a cone $\cl{C} \subseteq \cl{L}(\cl{H})$ of Hermitian operators is defined via the Hilbert-Schmidt inner product as
\begin{align*}
	\cl{C}^{\circ} := \{ Y \in \cl{L}(\cl{H}) : \Tr(XY) \geq 0 \ \text{ for all } X \in \cl{C} \}.
\end{align*}
Similarly, the dual cone $\cl{C}^{\circ}$ of a cone $\cl{C}$ of maps on $\cl{L}(\cl{H})$ is defined via the Choi-Jamio\l kowski isomorphism as $\cl{C}^{\circ} := \{ \Psi : \cl{L}(\cl{H}) \rightarrow \cl{L}(\cl{H}) : \Tr(C_{\Phi}C_{\Psi}) \geq 0 \ \text{ for all } \Phi \in \cl{C} \}$. We note that for convex cones $\cl{C} \subseteq \cl{L}(\cl{H})$, we have $\cl{C}^{\circ \circ} = \overline{\cl{C}}$ -- the closure of $\cl{C}$. This fact is well-known in convex analysis and follows easily from \cite[Theorem 14.1]{R97} or \cite[Theorem 3.4.3]{GY02}, for example.

Throughout the rest of this work, we will associate the $n$-dimensional Hilbert space $\cl{H}$ with $\bb{C}^n$ and $\cl{L}(\cl{H})$ with the space of $n \times n$ complex matrices $M_n$, both for simplicity and to be consistent with standard operator system notation. Then $\cl{L}(M_n)$ denotes the set of linear maps from $M_n$ into itself, $\cl{P}(M_n)$ denotes the set of positive maps on $M_n$, $\cl{P}_k(M_n)$ denotes the set of $k$-positive maps on $M_n$, and $\cl{CP}(M_n)$ the set of completely positive maps on $M_n$. We let $P_n \subseteq M_n \otimes M_n$ denote the cone of block-positive operators, and $S_n := P_n^{\circ}$ is the cone of separable operators \cite{SSZ09} -- operators $X \in M_n \otimes M_n$ that can be written in the form
\begin{align*}
	X = \sum_i Y_i \otimes Z_i \ \text{ for positive semidefinite } \{Y_i\}, \{Z_i\} \in M_n.
\end{align*}

Associated to the cone of separable operators via the Choi-Jamio\l kowski isomorphism is the cone of \emph{superpositive} maps $\cl{S}(M_n)$ (sometimes called \emph{entanglement-breaking maps} \cite{HSR03}). Similarly, the cone of \emph{$k$-superpositive} maps is the dual cone of the cone of $k$-positive maps: $\cl{S}_k(M_n) := \cl{P}_k(M_n)^{\circ}$. 

We close this section with a simple lemma (which also appeared as \cite[Lemma 3]{S11} with a different proof) that allows us to relate the Choi matrices of $\Phi$ and $\Phi^{\dagger}$. Note that in the particularly important case $\Phi = {\rm Ad}_A$, the lemma says that $(id_n \otimes {\rm Ad}_A)(E) = ({\rm Ad}_{A^T} \otimes id_n)(E)$, where $T$ denotes the transpose map.
\begin{lemma}\label{lem:left_right_choi}
	Let $\Phi : M_n \rightarrow M_n$. Then $(id_n \otimes \Phi)(E) = ((T \circ \Phi^\dagger \circ T) \otimes id_n)(E)$.
\end{lemma}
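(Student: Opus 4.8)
The plan is to reduce to a spanning set of maps and then invoke a ``ricochet'' (transpose) identity for the operator $E = \sum_{i,j} \mathbf{e}_i\mathbf{e}_j^* \otimes \mathbf{e}_i\mathbf{e}_j^*$. Both sides of the asserted equality are linear in $\Phi$: the left side obviously, and the right side because $\Phi \mapsto \Phi^{\dagger}$ is linear with respect to the bilinear trace pairing $\Tr(\Phi(X)Y) = \Tr(X\Phi^{\dagger}(Y))$ used to define the adjoint, while $\Phi \mapsto T \circ \Phi \circ T$ is clearly linear. Hence it suffices to verify the identity for $\Phi$ ranging over any set that spans $\cl{L}(M_n)$, and I would use the maps $\Phi = {\rm Ad}_A$, $A \in M_n$. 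A short polarization computation expresses every map $X \mapsto BXC$ as a linear combination of maps ${\rm Ad}_A$, and the maps $X \mapsto BXC$ obviously span $\cl{L}(M_n)$, so $\{{\rm Ad}_A : A \in M_n\}$ spans.

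For $\Phi = {\rm Ad}_A$, a direct computation with the trace pairing gives ${\rm Ad}_A^{\dagger} = {\rm Ad}_{A^*}$ (the map $X \mapsto AXA^*$), and then $T \circ {\rm Ad}_{A^*} \circ T$ is the map $X \mapsto \overline{A}\,X\,A^T$, which is exactly ${\rm Ad}_{A^T}$ since $(A^T)^* = \overline{A}$. So for this class of maps the claim is precisely the identity singled out in the remark preceding the lemma, namely $(id_n \otimes {\rm Ad}_A)(E) = ({\rm Ad}_{A^T} \otimes id_n)(E)$, and establishing it for all $A$ completes the proof for all $\Phi$.

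The substantive ingredient is the transpose identity: for every $X \in M_n$ one has $(X \otimes I_n)E = (I_n \otimes X^T)E$, and, taking adjoints of the Hermitian operator $E$, also $E(X \otimes I_n) = E(I_n \otimes X^T)$. This is the only place the explicit form of $E$ is used, and it is proved in one line by expanding $E$ in matrix units and relabelling summation indices. Granting it, I write $(id_n \otimes {\rm Ad}_A)(E) = (I_n \otimes A^*)\,E\,(I_n \otimes A)$ and push each outer factor through $E$ onto the first tensor leg: $(I_n \otimes A^*)E = (\overline{A} \otimes I_n)E$ and $E(I_n \otimes A) = E(A^T \otimes I_n)$, which yields $(\overline{A} \otimes I_n)\,E\,(A^T \otimes I_n) = ({\rm Ad}_{A^T} \otimes id_n)(E)$, as desired.

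I do not expect any genuine obstacle here; the only thing requiring care is the transpose/conjugate bookkeeping, together with confirming that $\dagger$ is linear (not conjugate-linear) under the bilinear convention in force. If one prefers to avoid the polarization reduction, the general statement can instead be obtained directly: expand $\Phi(\mathbf{e}_i\mathbf{e}_j^*) = \sum_{k,l} c_{kl,ij}\,\mathbf{e}_k\mathbf{e}_l^*$, deduce $\Phi^{\dagger}(\mathbf{e}_k\mathbf{e}_l^*) = \sum_{i,j} c_{lk,ji}\,\mathbf{e}_i\mathbf{e}_j^*$ from the trace pairing, and check that the coefficient of $\mathbf{e}_i\mathbf{e}_j^* \otimes \mathbf{e}_k\mathbf{e}_l^*$ equals $c_{kl,ij}$ on both sides — a completely mechanical verification.
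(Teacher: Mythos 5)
Your proof is correct, and it takes a genuinely different route from the paper's. The paper's argument vectorizes: it takes the singular value decomposition $C_{\Phi} = \sum_i \mathbf{v_i}\mathbf{w_i}^*$, invokes the correspondence $\Phi(X) = \sum_i A_i X B_i^{*}$ and $\Phi^{\dagger}(Y) = \sum_i B_i^{*} Y A_i$ with ${\rm vec}(A_i) = \mathbf{v_i}$, ${\rm vec}(B_i) = \mathbf{w_i}$ (citing \cite{C75} and \cite{WLN}), and finishes with the identity ${\rm vec}(X^T) = F\,{\rm vec}(X)$. You avoid vectorization entirely: you reduce by linearity (correctly observing that $\Phi \mapsto \Phi^{\dagger}$ is linear, not conjugate-linear, under the bilinear pairing $\Tr(\Phi(X)Y) = \Tr(X\Phi^{\dagger}(Y))$ used in the paper) and by polarization to the maps ${\rm Ad}_A$, compute ${\rm Ad}_A^{\dagger} = {\rm Ad}_{A^*}$ and $T \circ {\rm Ad}_{A^*} \circ T = {\rm Ad}_{A^T}$ directly, and then use the ricochet identity $(X \otimes I)E = (I \otimes X^T)E$ (together with its adjoint) to move the conjugating factors from the second tensor leg to the first. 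All the individual steps check out: the polarization identity $BXC = \tfrac{1}{4}\sum_{k=0}^{3} i^{-k}\,{\rm Ad}_{B^*+i^kC}(X)$ does express an arbitrary elementary map through ${\rm Ad}$'s, and the sign/conjugate bookkeeping in $(\overline{A}\otimes I)E(A^T\otimes I) = ({\rm Ad}_{A^T}\otimes id_n)(E)$ is right. Both arguments ultimately rest on the same transpose trick for the operator $E$ -- the paper encodes it in the ${\rm vec}$/flip relation, you state it as an operator identity -- but yours is more self-contained, since it needs no external representation theorem for $\Phi^{\dagger}$, at the modest cost of the spanning/polarization step; your closing coordinate computation is also a valid, fully elementary alternative.
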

\begin{proof}
	Throughout this proof, by a vectorization ${\rm vec}(X)$ of a matrix $X$, we mean the vector in $\bb{C}^{n} \otimes \bb{C}^{n} \cong \bb{C}^{n^2}$ obtained from $X \in M_n$ by stacking the columns of $X$ on top of each other, starting with the leftmost column. Use the singular value decomposition to write $C_{\Phi} = \sum_i {\mathbf v_i}{\mathbf w_i}^*$. It is easily verified that for any $X \in M_n$, ${\rm vec}(X^T) = F{\rm vec}(X)$, where $F$ is the ``swap'' or ``flip'' operator that acts on elementary tensors as $F({\mathbf a} \otimes {\mathbf b}) = {\mathbf b} \otimes {\mathbf a}$. The result follows from recalling (see \cite{C75} or \cite[Proposition 6.2]{WLN}, for example) that we can write $\Phi(X) = \sum_i A_i X B_i^{*}$ and $\Phi^{\dagger}(Y) = \sum_i B_i^{*} Y A_i$, where ${\rm vec}(A_i) = {\mathbf v_i}$ and ${\rm vec}(B_i) = {\mathbf w_i}$.
\end{proof}

\section{Operator Systems on $M_n$}\label{sec:os_prelims}

An (abstract) operator system on $M_n$ is a family of convex cones $\{C_m\}_{m=1}^{\infty} \subseteq M_m \otimes M_n$ that satisfy the following two properties:
\begin{itemize}
	\item $C_1 = M_n^{+}$, the cone of positive semidefinite elements of $M_n$; and
	\item for each $m_1,m_2 \in \bb{N}$ and $A \in M_{m_1,m_2}$ we have $({\rm Ad}_{A} \otimes id_n)(C_{m_1}) \subseteq C_{m_2}$.
\end{itemize}
Abstract operator systems can be defined more generally as matrix ordered $*$-vector spaces on any Archimedean $*$-ordered vector space $V$, but the above definition with $V = M_n$ is much simpler and suited to our particular needs. The interested reader is directed to \cite[Chapter 13]{Paulsentext} for a more thorough treatment of general abstract operator systems. The fact that matrix ordered $*$-vector spaces can be thought of as operator systems follows from the work of Choi and Effros \cite{CE77}.

\begin{remark}{\rm
	Abstract operator systems typically are defined with two additional requirements that we have not mentioned:
	\begin{itemize}
		\item $C_m \cap -C_m = \{0\}$ for each $m \in \bb{N}$; and
		\item for every $m \in \bb{N}$ and $X = X^* \in M_m \otimes M_n$, there exists $r > 0$ such that $rI + X \in C_m$.
	\end{itemize}
	Both of these conditions follow for free from the fact that, in our setting, $C_1 = M_n^{+}$.
	
	To see that the first property holds, notice that $C_1 \cap -C_1 = \{0\}$, and suppose that $X \in C_m \cap -C_m$ for some $m \geq 2$. Then $({\rm Ad}_{A} \otimes id_n)(X) \in C_1 \cap -C_1$ for any $A \in M_{m,1}$. Because $C_1 \cap -C_1 = \{0\}$, it follows that $({\mathbf v}^*\otimes{\mathbf w}^*)X({\mathbf v}\otimes{\mathbf w}) = 0$ for all ${\mathbf v} \in \bb{C}^m$, ${\mathbf w} \in \bb{C}^n$. It follows (via \cite[Lemma 2.1]{J11}, for example) that $X = 0$, so $C_m \cap -C_m = \{0\}$ for all $m \in \bb{N}$.
	
	The second property holds because the smallest family of cones on $M_n$ such that $({\rm Ad}_{A} \otimes id_n)(C_{m_1}) \subseteq C_{m_2}$ for all $m_1,m_2 \in \bb{N}$ are the cones of separable operators in $M_m \otimes M_n$ \cite[Theorem 5]{JKPP11}. It is well-known that there always exists $r > 0$ such that $rI + X$ is separable \cite{GB02}, so the same $r$ ensures that $rI + X \in C_m$.
}\end{remark}

One particularly important operator system is the one constructed by associating $M_m \otimes M_n$ with $M_{mn}$ in the natural way and letting $C_m \subseteq M_m \otimes M_n$ be the cones of positive semidefinite operators. We will denote this operator system simply by $M_n$, and it will be clear from context whether we mean the operator system $M_n$ or simply the set $M_n$ without regard to any family of cones. Other operator systems on $M_n$ will be denoted like $O(M_n)$ in order to avoid confusion with the operator system $M_n$ itself.

If $O_1(M_n)$ and $O_2(M_n)$ are two operator systems defined by the cones $\{C_m\}_{m=1}^{\infty}$ and $\{D_m\}_{m=1}^{\infty}$ respectively, then a map $\Phi : M_n \rightarrow M_n$ is said to be completely positive from $O_1(M_n)$ to $O_2(M_n)$ if $(id_m \otimes \Phi)(C_m) \subseteq D_m$ for all $m \in \bb{N}$. The set of maps that are completely positive from $O_1(M_n)$ to $O_2(M_n)$ is denoted by $\cl{CP}(O_1(M_n),O_2(M_n))$, or simply $\cl{CP}(O(M_n))$ if the target operator system equals the source operator system. It will often be useful for us to consider operator systems with the additional property that $(id_m \otimes {\rm Ad}_{B})(C_{m}) \subseteq C_{m}$ for each $m \in \bb{N}$ and $B \in M_{n}$ -- a property that is equivalent to the fact $\cl{CP}(M_n) \subseteq \cl{CP}(O(M_n))$. We will call operator systems with this property \emph{super-homogeneous}.

We now present a result that shows that operator systems on $M_n$ are in fact characterized completely by their $n^{th}$ cone. That is, there is a unique way to construct an operator system given an appropriate cone $C_n \subseteq M_n \otimes M_n$.
\begin{prop}\label{prop:cn_to_os}
	Let $C_n \subseteq M_n \otimes M_n$ be a convex cone such that $S_n \subseteq C_n \subseteq P_n$ and $({\rm Ad}_A \otimes id_n)(C_n) \subseteq C_n$ for all $A \in M_n$. Then there exists a unique family of cones $\{C_m\}_{m\neq n}$ such that $\{C_m\}_{m=1}^{\infty}$ defines an operator system on $M_n$, given by
	\begin{align*}
		C_m := \big\{ \sum_i({\rm Ad}_{A_i} \otimes id_n)(X) : A_i \in M_{n,m} \ \forall \, i, X \in C_n \big\}.
	\end{align*}
	Furthermore, the operator system is super-homogeneous if and only if $(id_n \otimes {\rm Ad}_{B})(C_n) \subseteq C_n$ for all $B \in M_n$.
\end{prop}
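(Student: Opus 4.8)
The plan is to establish three things in turn: that the displayed family $\{C_m\}_{m=1}^{\infty}$ is an operator system on $M_n$ (the formula returns $C_n$ when $m=n$, since $C_n$ is a convex cone closed under the maps ${\rm Ad}_A\otimes id_n$ with $A\in M_n$, so there is no inconsistency); that it is the unique operator system whose $n$th cone equals $C_n$; and that it is super-homogeneous exactly when $(id_n\otimes{\rm Ad}_B)(C_n)\subseteq C_n$ for all $B\in M_n$. The last point is short once the first two are in hand, so the effort goes into existence and, above all, uniqueness.

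\emph{Existence.} I would first note that each $C_m$ is a convex cone, and then verify the two operator-system axioms. The covariance axiom reduces to the identity ${\rm Ad}_A\circ{\rm Ad}_{A'}={\rm Ad}_{A'A}$: applying ${\rm Ad}_A\otimes id_n$ (for $A\in M_{m_1,m_2}$) to a generator $({\rm Ad}_{A_i}\otimes id_n)(X)$ of $C_{m_1}$, with $A_i\in M_{n,m_1}$, produces $({\rm Ad}_{A_iA}\otimes id_n)(X)$ with $A_iA\in M_{n,m_2}$, so $({\rm Ad}_A\otimes id_n)(C_{m_1})\subseteq C_{m_2}$. For $C_1=M_n^{+}$: taking $A_i\in M_{n,1}$ and $X\in C_n\subseteq P_n$, block positivity of $X$ gives ${\mathbf w}^{*}({\rm Ad}_{A_i}\otimes id_n)(X){\mathbf w}=(A_i\otimes{\mathbf w})^{*}X(A_i\otimes{\mathbf w})\ge 0$, so $C_1\subseteq M_n^{+}$; conversely any $Y\in M_n^{+}$ equals $({\rm Ad}_{{\mathbf e_1}}\otimes id_n)({\mathbf e_1}{\mathbf e_1}^{*}\otimes Y)$ with ${\mathbf e_1}{\mathbf e_1}^{*}\otimes Y\in S_n\subseteq C_n$, giving $M_n^{+}\subseteq C_1$.

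\emph{Uniqueness.} Let $\{D_m\}$ be any operator system with $D_n=C_n$. That $C_m\subseteq D_m$ is immediate, since each generator $({\rm Ad}_{A_i}\otimes id_n)(X)$ with $X\in D_n$ and $A_i\in M_{n,m}$ lies in $D_m$ by the covariance axiom for $D$ (source $n$, target $m$). For $D_m\subseteq C_m$, the covariance axiom for $D$ (source $m$, target $n$) forces every $n$-dimensional compression $({\rm Ad}_B\otimes id_n)(Y)$ of an element $Y\in D_m$, with $B\in M_{m,n}$, back into $C_n$; the task is then to recover $Y$ from these compressions, using only the operations allowed in the definition of $C_m$. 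When $m\le n$ this is quick: with $B_0=[\,I_m\mid 0\,]\in M_{m,n}$ one has $B_0B_0^{*}=I_m$, so $Y=({\rm Ad}_{B_0^{*}}\otimes id_n)(\tilde Y)$ for $\tilde Y:=({\rm Ad}_{B_0}\otimes id_n)(Y)\in C_n$ and $B_0^{*}\in M_{n,m}$, exhibiting $Y\in C_m$. The case $m>n$ is the crux and where I expect the main obstacle: one must reassemble $Y$ as a genuinely conic combination of images of its $n$-dimensional compressions. Here block positivity of $Y$ (a consequence of $C_n\subseteq P_n$) should be essential --- it should permit a ``windowing'' argument over the $n$-dimensional coordinate subspaces of $\bb{C}^m$ that avoids the negative coefficients a naive inclusion--exclusion identity would introduce.

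\emph{Super-homogeneity.} The ``only if'' direction is the $m=n$ case of the definition. For ``if'', assume $(id_n\otimes{\rm Ad}_B)(C_n)\subseteq C_n$ for every $B\in M_n$ and take $Y=\sum_i({\rm Ad}_{A_i}\otimes id_n)(X)\in C_m$ with $X\in C_n$, $A_i\in M_{n,m}$. Because ${\rm Ad}_{A_i}$ acts on the first tensor leg and ${\rm Ad}_B$ on the second, these maps commute, so
\[
(id_m\otimes{\rm Ad}_B)(Y)=\sum_i({\rm Ad}_{A_i}\otimes id_n)\big((id_n\otimes{\rm Ad}_B)(X)\big),
\]
and $(id_n\otimes{\rm Ad}_B)(X)\in C_n$ by hypothesis, so the right-hand side again has the form defining $C_m$. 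Since $m$ and $B$ were arbitrary, the operator system is super-homogeneous.
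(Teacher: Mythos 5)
Your existence argument, the $m\le n$ uniqueness argument, and the super-homogeneity equivalence are all correct and essentially identical to the paper's (the paper uses an isometry $V:\bb{C}^m\to\bb{C}^n$ where you use $B_0=[\,I_m\mid 0\,]$, and it verifies $C_1=M_n^+$ via the observation that ${\mathbf v}{\mathbf v}^*\otimes X$ lies in $S_n$, resp.\ $P_n$, iff $X\in M_n^+$, which is the same computation as yours).

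The genuine gap is exactly where you flag it: uniqueness for $m>n$. You correctly identify that one must show every $Y\in D_m$ is a conic combination of images $({\rm Ad}_{A_i}\otimes id_n)(X)$ of elements of $C_n$ under maps from $n$-dimensional spaces, but the ``windowing'' over coordinate subspaces is only a hope, not an argument --- and the naive version does not work. Block positivity of $Y$ (which does hold, since all compressions to $C_1$ are positive) is far too weak to produce such a decomposition with nonnegative coefficients; what is really needed is the statement that an operator system on $M_n$ is completely determined by its cones at levels $1\le m\le n$, equivalently that the ``super $n$-maximal'' structure generated by $C_n$ coincides with the ``super $n$-minimal'' one. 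This is a nontrivial theorem, and the paper does not prove it either: it cites Section 2.3 of Xhabli's thesis and Section 4 of \cite{JKPP11} for precisely this step. (The underlying mechanism, in the model case where $D_m$ is the positive semidefinite cone, is a Schmidt-rank argument: a vector in $\bb{C}^m\otimes\bb{C}^n$ has Schmidt rank at most $n$, hence factors through $\bb{C}^n\otimes\bb{C}^n$; extending this to arbitrary matrix-ordered cones is the content of the cited result.) So your proof is incomplete as written at this one step; to close it you would either need to reproduce that argument or invoke the same reference.

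One minor structural remark: the displayed definition of $C_m$ uses a single $X\in C_n$ with several $A_i$, whereas your generators implicitly allow different $X_i$ per term. These generate the same cone here (because $C_n$ is convex and ${\rm Ad}$-stable one can always merge), but it is worth a sentence so that the ``$C_m\subseteq D_m$'' direction and the closure of $C_m$ under addition are both transparent.
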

\begin{proof}
	We first prove that the family of convex cones given by the proposition do indeed define an operator system. We first show that $({\rm Ad}_B \otimes id_n)(Y) \in C_{m_2}$ for any $m_1,m_2 \in \bb{N}$, $Y \in C_{m_1}$, and $B \in M_{m_1,m_2}$. This is true from the definition of $C_m$ if $m_1 = n$. If $m_1 \neq n$ then write $Y = \sum_i({\rm Ad}_{A_i} \otimes id_n)(X)$ for some $X \in C_n$ and $\{A_i\} \subset M_{n,m_1}$. Then $A_i B \in M_{n,m_2}$ for all $i$, so
	\begin{align*}
		({\rm Ad}_B \otimes id_n)(Y) = \sum_i({\rm Ad}_{A_i B} \otimes id_n)(X) \in C_{m_2}.
	\end{align*}
	
	We now show that $C_1 = M_n^{+}$. For any ${\mathbf v} \in \bb{C}^n$, note that ${\mathbf v}{\mathbf v}^* \otimes X \in S_n$ if and only if $X \in M_n^{+}$, and similarly ${\mathbf v}{\mathbf v}^* \otimes X \in P_n$ if and only if $X \in M_n^{+}$. It follows that ${\mathbf v}{\mathbf v}^* \otimes X \in C_n$ if and only if $X \in M_n^{+}$. Then $C_1 \supseteq \{ ({\rm Ad}_{A} \otimes id_n)({\mathbf v}{\mathbf v}^* \otimes X) : A \in M_{n,1}, X \in M_n^{+} \big\} = M_n^{+}$, where we have identified $\bb{R}_+ \otimes M_n^+$ with $M_n^+$. The opposite inclusion follows simply from noting that if $X \in C_1$ and ${\mathbf v} \in \bb{C}^n$ then ${\mathbf v}{\mathbf v}^* \otimes X \in C_n$, so $X \in M_n^+$. It follows that $C_1 \subseteq M_n^{+}$, so $C_1 = M_n^{+}$, so the cones $\{C_m\}_{m=1}^{\infty}$ define an operator system on $M_n$.
		
	To prove uniqueness, assume that there exists another family of cones $\{D_m\}_{m=1}^{\infty}$ that define an operator system, such that $D_n = C_n$. It is clear that $C_m \subseteq D_m$ for all $m \in \bb{N}$, so we only need to prove the other inclusion. If $m \leq n$, let $X \in D_m$ and let $V : \bb{C}^m \rightarrow \bb{C}^n$ be an isometry (i.e., $V^*V = I$). Then $Y := ({\rm Ad}_{V^*} \otimes id_n)(X) \in D_n = C_n$, so $X = ({\rm Ad}_{V} \otimes id_n)(Y) \in C_m$. Thus $D_m \subseteq C_m$, so $D_m = C_m$ for $m \leq n$. If $m > n$ then we recall from \cite[Section 2.3]{Xthesis} the $k$-super minimal and $k$-super maximal operator system structures. In particular, it was shown that if two operator systems on $M_n$, defined by cones $\{C_m\}_{m=1}^{\infty}$ and $\{D_m\}_{m=1}^{\infty}$ respectively, are such that $C_m = D_m$ for $1 \leq m \leq n$, then $C_m = D_m$ for all $m \in \bb{N}$. See also \cite[Section 4]{JKPP11}.

	The ``only if'' direction of the final claim is trivial from the definition of super-homogeneity, and the ``if'' direction follows easily from the fact that $({\rm Ad}_A \otimes id_n)$ and $(id_m \otimes {\rm Ad}_B)$ commute. This completes the proof.
\end{proof}

We close this section with a result that shows that to determine complete positivity of a map from one operator system on $M_n$ to another, it is enough to look at the action of that map on the $n^{th}$ cone of the operator systems.
\begin{cor}\label{cor:cp_on_cn}
	Let $\Phi : M_n \rightarrow M_n$ and let $O_1(M_n)$ and $O_2(M_n)$ be operator systems defined by families of cones $\{C_m\}_{m=1}^{\infty}$ and $\{D_m\}_{m=1}^{\infty}$, respectively. Then $\Phi \in \cl{CP}(O_1(M_n),O_2(M_n))$ if and only if $(id_n \otimes \Phi)(C_n) \subseteq D_n$.
\end{cor}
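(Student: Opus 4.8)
The plan is to prove both directions of the equivalence, the forward direction being immediate and the reverse being the one requiring work. The forward direction: if $\Phi \in \cl{CP}(O_1(M_n), O_2(M_n))$, then by definition $(id_m \otimes \Phi)(C_m) \subseteq D_m$ for all $m \in \bb{N}$, so in particular this holds for $m = n$. Nothing further is needed here.

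For the reverse direction, suppose $(id_n \otimes \Phi)(C_n) \subseteq D_n$; I must show $(id_m \otimes \Phi)(C_m) \subseteq D_m$ for every $m$. Here I would invoke Proposition~\ref{prop:cn_to_os}: the operator systems $O_1(M_n)$ and $O_2(M_n)$ are completely determined by their $n^{th}$ cones via the formula $C_m = \{ \sum_i ({\rm Ad}_{A_i} \otimes id_n)(X) : A_i \in M_{n,m}, X \in C_n \}$, and likewise for $D_m$ in terms of $D_n$. So fix $m$ and take an arbitrary $Y \in C_m$; write $Y = \sum_i ({\rm Ad}_{A_i} \otimes id_n)(X_i)$ with $A_i \in M_{n,m}$ and $X_i \in C_n$. (A mild point: the displayed formula is stated with a single $X \in C_n$, but since $C_n$ is a cone closed under the maps ${\rm Ad}_{A}\otimes id_n$ and one may pad the $A_i$ appropriately, an element of $C_m$ is in any case a finite sum $\sum_i ({\rm Ad}_{A_i}\otimes id_n)(X_i)$; I would just use the sum form directly, or note the reduction to a single term.) The key computational fact is that $id_m \otimes \Phi$ commutes with ${\rm Ad}_{A_i} \otimes id_n$ for $A_i \in M_{n,m}$ — more precisely, $(id_m \otimes \Phi)({\rm Ad}_{A_i} \otimes id_n) = ({\rm Ad}_{A_i} \otimes id_n)(id_n \otimes \Phi)$ as maps $M_n \otimes M_n \to M_m \otimes M_n$, since one acts on the first tensor factor and the other on the second. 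Applying this,
\begin{align*}
	(id_m \otimes \Phi)(Y) = \sum_i ({\rm Ad}_{A_i} \otimes id_n)\big((id_n \otimes \Phi)(X_i)\big).
\end{align*}
By hypothesis $(id_n \otimes \Phi)(X_i) \in D_n$ for each $i$, and since $D_n$ is a convex cone satisfying $({\rm Ad}_{A_i} \otimes id_n)(D_n) \subseteq D_n$ is not quite what we want — rather, by the defining property of the operator system $O_2(M_n)$ we have $({\rm Ad}_{A_i} \otimes id_n)(D_n) \subseteq D_m$ since $A_i \in M_{n,m}$. Hence each summand lies in $D_m$, and as $D_m$ is a convex cone, the sum lies in $D_m$. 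Thus $(id_m \otimes \Phi)(C_m) \subseteq D_m$.

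The only mild obstacle is the bookkeeping in handling $C_m$ as a sum of terms rather than a single term, together with getting the tensor-leg placements right so that the commutation $(id_m \otimes \Phi)({\rm Ad}_{A_i} \otimes id_n) = ({\rm Ad}_{A_i} \otimes id_n)(id_n \otimes \Phi)$ is applied with the correct domains and codomains — but this is routine and identical in spirit to the commutation argument already used at the end of the proof of Proposition~\ref{prop:cn_to_os}. No genuinely new idea is required: the corollary is essentially a restatement of the uniqueness-from-the-$n^{th}$-cone phenomenon established in that proposition.
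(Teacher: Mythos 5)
Your proof is correct and follows essentially the same route as the paper: both directions hinge on writing $C_m$ in the form guaranteed by Proposition~\ref{prop:cn_to_os} and commuting $id_m\otimes\Phi$ past $\mathrm{Ad}_{A_i}\otimes id_n$, which is exactly the computation the paper carries out (there expressed as a union over choices of $\{A_i\}$ rather than elementwise). Your side remark about single $X$ versus multiple $X_i$ is a reasonable extra care, and your use of the operator-system axiom $(\mathrm{Ad}_{A_i}\otimes id_n)(D_n)\subseteq D_m$ in place of the explicit form of $D_m$ is an immaterial variation.
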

\begin{proof}
	The ``only if'' implication follows trivially from the definition of $\cl{CP}(O_1(M_n),O_2(M_n))$. For the ``if'' implication, suppose $(id_n \otimes \Phi)(C_n) \subseteq D_n$. Fixing $m \in \bb{N}$ arbitrarily and applying $\sum_i {\rm Ad}_{A_i} \otimes id_n$ for $\{A_i\} \in M_{n,m}$ to both sides then gives
	\begin{align*}
		(id_m \otimes \Phi)(C_m) & = \bigcup_{\{A_i\} \in M_{n,m}} \left\{\sum_i({\rm Ad}_{A_i} \otimes \Phi)(C_n)\right\} \\
		& \subseteq \bigcup_{\{A_i\} \in M_{n,m}}\left\{\sum_i({\rm Ad}_{A_i} \otimes id_n)(D_n)\right\} \\
		& = D_m,
	\end{align*}
	where both of the above equalities follow from the form of the cones $\{C_m\}_{m=1}^{\infty}$ and $\{D_m\}_{m=1}^{\infty}$ guaranteed by Proposition~\ref{prop:cn_to_os}. It follows that $\Phi \in \cl{CP}(O_1(M_n),O_2(M_n))$, completing the proof.
\end{proof}

\section{Right-CP-Invariant Cones as Operator Systems}\label{sec:opSysRight}

In this section we establish a link between right-CP-invariant cones and operator systems. Our first result is in the same vein as some known results on mapping cones such as \cite[Theorem 1]{Sk10} and \cite[Theorem 1]{S09}. Here we prove an analogous statement for cones that are just right-CP-invariant.
\begin{prop}\label{prop:compose_right_cp}
	Let $\cl{C} \subseteq \cl{L}(M_n)$ be a right-CP-invariant cone. Then $\Psi^{\dagger} \circ \Phi \in \cl{CP}(M_n)$ for all $\Phi \in \cl{C}$ if and only if $\Psi \in \cl{C}^{\circ}$.
\end{prop}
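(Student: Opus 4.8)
The plan is to translate everything into Choi matrices and use the duality $\cl{C}^{\circ\circ} = \overline{\cl{C}}$ together with the characterization of complete positivity in terms of positive semidefiniteness of the Choi matrix. First I would recall that $\Psi \in \cl{C}^{\circ}$ means exactly $\Tr(C_{\Phi} C_{\Psi}) \geq 0$ for all $\Phi \in \cl{C}$, so I need to identify $\Tr(C_{\Phi}C_{\Psi})$ with something expressed through the composition $\Psi^{\dagger} \circ \Phi$. The key computational identity is that $\Tr(C_{\Phi} C_{\Psi})$ can be rewritten as a trace involving $\Psi^{\dagger} \circ \Phi$ evaluated on $E$; more precisely, using Lemma~\ref{lem:left_right_choi} to move a factor between the left and right tensor legs, one gets $\Tr(C_{\Phi} C_{\Psi}) = \Tr\big((id_n \otimes (\Psi^{\dagger} \circ \Phi))(E)\big) = \Tr\big(C_{\Psi^{\dagger} \circ \Phi}\big)$ up to the appropriate bookkeeping with the flip operator $F$ and transposes. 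The point is that testing $\Psi$ against all of $\cl{C}$ via the Hilbert--Schmidt pairing is the same as asking that a single scalar attached to $\Psi^{\dagger}\circ\Phi$ be nonnegative for every $\Phi \in \cl{C}$.

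For the ``if'' direction, suppose $\Psi \in \cl{C}^{\circ}$. I would show $\Psi^{\dagger} \circ \Phi$ is completely positive by checking $C_{\Psi^{\dagger}\circ\Phi} \geq 0$, i.e.\ that $\bra{\xi} C_{\Psi^{\dagger}\circ\Phi} \ket{\xi} \geq 0$ for every $\xi \in \bb{C}^n \otimes \bb{C}^n$. The trick is that, because $\cl{C}$ is right-CP-invariant, for any $A \in M_n$ the map $\Phi \circ {\rm Ad}_A$ lies in $\cl{C}$, and its Choi matrix is $(id_n \otimes \Phi)({\rm Ad}_{A^T} \otimes id_n)(E)$-type expression (again via Lemma~\ref{lem:left_right_choi}); taking $A$ to range over rank-one matrices lets me realize an arbitrary vector state $\ketbra{\xi}{\xi}$ (or rather its image under $id_n\otimes\Phi$) against which $C_{\Psi}$ is being tested. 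Concretely: $0 \leq \Tr\big(C_{\Phi\circ{\rm Ad}_A}\, C_{\Psi}\big)$ for all $A$, and this pairing unwinds to $\bra{\eta_A} C_{\Psi^{\dagger}\circ\Phi}\ket{\eta_A}$ for a vector $\eta_A$ that sweeps out all of $\bb{C}^n\otimes\bb{C}^n$ as $A$ varies, forcing $C_{\Psi^{\dagger}\circ\Phi}\geq 0$.

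For the ``only if'' direction, suppose $\Psi^{\dagger}\circ\Phi \in \cl{CP}(M_n)$ for every $\Phi\in\cl{C}$. Then $C_{\Psi^{\dagger}\circ\Phi}\geq 0$, so in particular $\Tr(C_{\Psi^{\dagger}\circ\Phi}) \geq 0$ (the trace of a positive semidefinite operator), and running the identity from the first paragraph backwards gives $\Tr(C_{\Phi}C_{\Psi})\geq 0$; since this holds for all $\Phi\in\cl{C}$, we conclude $\Psi\in\cl{C}^{\circ}$. This direction is essentially immediate once the trace identity is in place.

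The main obstacle is getting the algebra of Lemma~\ref{lem:left_right_choi} to cooperate so that the Hilbert--Schmidt pairing $\Tr(C_{\Phi}C_{\Psi})$ really does equal (a positive multiple of) $\bra{\eta}C_{\Psi^{\dagger}\circ\Phi}\ket{\eta}$ for the right family of vectors $\eta$, and in particular making sure the ``if'' direction produces \emph{all} test vectors $\xi$ rather than only a restricted subset --- this is exactly where right-CP-invariance (closure under $\Phi \mapsto \Phi\circ{\rm Ad}_A$) is used, and it is the step that would fail for a general cone. A secondary bookkeeping annoyance is keeping track of transposes and the flip operator $F$ correctly, but that is routine.
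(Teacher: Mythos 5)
Your overall strategy matches the paper's: translate both conditions into statements about Choi matrices and exploit right-CP-invariance only in the ``if'' direction. That direction of your proposal is correct and is a mild variant of the paper's argument: you compose only with the maps ${\rm Ad}_A$ and test $C_{\Psi^\dagger\circ\Phi}$ directly against the vectors $\eta_A=(A^T\otimes I)\sum_i{\mathbf e_i}\otimes{\mathbf e_i}$, which sweep out all of $\bb{C}^n\otimes\bb{C}^n$ as $A$ ranges over $M_n$; the paper instead composes with arbitrary completely positive $\Omega$, concludes $\Tr(C_{\Phi^\dagger\circ\Psi}C_\Omega)\geq 0$ for all such $\Omega$, invokes self-duality of the positive semidefinite cone to get $\Phi^\dagger\circ\Psi\in\cl{CP}(M_n)$, and then takes adjoints. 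These are essentially the same argument, yours being marginally more hands-on.

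The problem is the identity on which your ``only if'' direction rests: $\Tr(C_\Phi C_\Psi)$ does \emph{not} equal $\Tr(C_{\Psi^\dagger\circ\Phi})$. A direct computation gives $\Tr(C_{\Psi^\dagger\circ\Phi})=\Tr(\Psi^\dagger(\Phi(I)))=\Tr(\Psi(I)\Phi(I))$, which depends only on $\Phi(I)$ and $\Psi(I)$ and so cannot reproduce the full Hilbert--Schmidt pairing of the Choi matrices (for $\Phi=\Psi=id_n$ the left side is $\Tr(E^2)=n^2$ while the right side is $\Tr(E)=n$, and as $\Phi,\Psi$ vary the two quantities are not even proportional). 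This is not transpose-or-flip bookkeeping; what is missing is a factor of $E$ inside the trace. The correct identity, and the one the paper uses, is $\Tr(C_\Phi C_\Psi)=\Tr\big(E\,(id_n\otimes(\Psi^\dagger\circ\Phi))(E)\big)=\Tr(E\,C_{\Psi^\dagger\circ\Phi})$, obtained by moving $id_n\otimes\Psi$ across the trace as its adjoint $id_n\otimes\Psi^\dagger$. With that correction your argument closes: if $C_{\Psi^\dagger\circ\Phi}\geq 0$ then $\Tr(E\,C_{\Psi^\dagger\circ\Phi})\geq 0$ because $E$ is itself positive semidefinite ($n$ times a rank-one projection) --- not merely because traces of positive operators are nonnegative, which is the justification you gave and which pairs with the wrong scalar. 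So the flaw is a single misstated identity rather than a structural one, but as written your ``only if'' direction does not establish $\Psi\in\cl{C}^\circ$.
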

\begin{proof}
	To prove the ``only if'' implication, suppose $\Psi \in \cl{L}(M_n)$ and $\Psi^{\dagger} \circ \Phi \in \cl{CP}(M_n)$ for all $\Phi \in \cl{C}$. Then $C_{\Psi^{\dagger} \circ \Phi} \in (M_n \otimes M_n)^+$ so
	\begin{align*}
		0 \leq \Tr(E C_{\Psi^{\dagger} \circ \Phi}) & = \Tr(E (id_n \otimes (\Psi^{\dagger} \circ \Phi))(E)) \\
		& = \Tr((id_n \otimes \Psi)(E) (id_n \otimes \Phi)(E)) = \Tr(C_{\Psi}C_{\Phi}) \quad \forall \, \Phi \in \cl{C},
	\end{align*}
	where we recall that $E := \sum_{i,j=1}^n {\mathbf e_i}{\mathbf e_j}^* \otimes {\mathbf e_i}{\mathbf e_j}^*$. It follows that $\Psi \in \cl{C}^{\circ}$. It is perhaps worth noting that the proof of this implication did not make use of right-CP-invariance of $\cl{C}$.
	
	To see why the ``if'' implication holds, assume $\Psi \in \cl{C}^{\circ}$. Then, because $\cl{C}$ is right-CP-invariant, it follows that for any $\Phi \in \cl{C}$ and $\Omega \in \cl{CP}(M_n)$ we have $\Phi \circ \Omega \in \cl{C}$ so
	\begin{align*}
		0 \leq \Tr(C_{\Psi}C_{\Phi \circ \Omega}) & = \Tr((id_n \otimes \Psi)(E) (id_n \otimes (\Phi \circ \Omega))(E)) \\
		& = \Tr((id_n \otimes (\Phi^\dagger \circ \Psi))(E) (id_n \otimes \Omega)(E)) = \Tr(C_{\Phi^\dagger \circ \Psi}C_{\Omega}).
	\end{align*}
	It follows via the Choi-Jamio\l kowski isomorphism that $C_{\Phi^\dagger \circ \Psi} \in (M_n \otimes M_n)^+$, so $\Phi^\dagger \circ \Psi \in \cl{CP}(M_n)$. Then $(\Phi^\dagger \circ \Psi)^{\dagger} = \Psi^\dagger \circ \Phi \in \cl{CP}(M_n)$, completing the proof.
\end{proof}

It is not difficult to verify that if $O(M_n)$ is any operator system, then $CP(M_n,O(M_n))$ is a right-CP-invariant cone. Similarly, $CP(O(M_n),M_n)$ is easily seen to be a closed left-CP-invariant cone. The main result of this section shows that these properties actually characterize the possible cones of completely positive maps to and from $M_n$, and furthermore that these cones uniquely determine $O(M_n)$.

Recall that $\cl{P}(M_n)$ denotes the cone of positive maps on $M_n$, $\cl{S}(M_n)$ denotes the cone of superpositive maps on $M_n$, and $C_{\cl{C}}$ denotes the cone of Choi matrices of maps from the cone $\cl{C}$.
\begin{thm}\label{thm:right_cp_invariant}
	Let $\cl{C} \subseteq \cl{L}(M_n)$ be a convex cone. The following are equivalent:
	\begin{enumerate}
		\item $\cl{C}$ is right-CP-invariant with $\cl{S}(M_n) \subseteq \cl{C} \subseteq \cl{P}(M_n)$.
		\item There exists an operator system $O_1(M_n)$, defined by cones $\{C_m\}_{m=1}^{\infty}$, such that $C_{\cl{C}} = C_n$.
		\item There exists an operator system $O_2(M_n)$ such that $\cl{C} = \cl{CP}(M_n,O_2(M_n))$.
		\item There exists an operator system $O_3(M_n)$ such that $(\cl{C}^{\circ})^{\dagger} = \cl{CP}(O_3(M_n),M_n)$.
	\end{enumerate}
	Furthermore, $O_1(M_n) = O_2(M_n)$ and is uniquely determined by $\cl{C}$, and $O_3(M_n)$ is uniquely determined by $\overline{\cl{C}}$ and can be chosen so that $O_3(M_n) = O_1(M_n)$.
\end{thm}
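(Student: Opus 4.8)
The plan is to prove the chain of equivalences (1) $\Rightarrow$ (2) $\Rightarrow$ (3) $\Rightarrow$ (1), then (2) $\Leftrightarrow$ (4), and finally collect the uniqueness statements. First I would handle (1) $\Rightarrow$ (2): given a right-CP-invariant cone $\cl{C}$ with $\cl{S}(M_n) \subseteq \cl{C} \subseteq \cl{P}(M_n)$, set $C_n := C_{\cl{C}}$. The inclusions $\cl{S}(M_n) \subseteq \cl{C} \subseteq \cl{P}(M_n)$ translate under the Choi--Jamio\l kowski isomorphism to $S_n \subseteq C_n \subseteq P_n$, and right-CP-invariance of $\cl{C}$ must be shown to translate to $({\rm Ad}_A \otimes id_n)(C_n) \subseteq C_n$ for all $A \in M_n$; this is where Lemma~\ref{lem:left_right_choi} does the work, since $\Phi \circ {\rm Ad}_B$ has Choi matrix $({\rm Ad}_{B^T} \otimes id_n)(C_\Phi)$ up to the transpose bookkeeping in the lemma, and as $B$ ranges over $M_n$ so does $B^T$. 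Then Proposition~\ref{prop:cn_to_os} supplies the (unique) operator system $O_1(M_n)$ with that $n^{th}$ cone, proving (2) and simultaneously the uniqueness of $O_1(M_n)$.

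Next, for (2) $\Rightarrow$ (3), take $O_2(M_n) := O_1(M_n)$ and apply Corollary~\ref{cor:cp_on_cn} with source operator system $M_n$ (whose $n^{th}$ cone is $(M_n \otimes M_n)^+$) and target $O_1(M_n)$: a map $\Phi$ is in $\cl{CP}(M_n, O_1(M_n))$ if and only if $(id_n \otimes \Phi)((M_n\otimes M_n)^+) \subseteq C_n$, and since $(M_n \otimes M_n)^+$ is the set of Choi matrices of completely positive maps, the left-hand side is the set of Choi matrices $C_{\Phi \circ \Omega}$ for $\Omega \in \cl{CP}(M_n)$; one then argues that this is contained in $C_n = C_{\cl{C}}$ exactly when $C_\Phi \in C_{\cl{C}}$, using right-CP-invariance of $\cl{C}$ for one direction (close $\cl{C}$ under right composition) and the presence of $id_n \in \cl{CP}(M_n)$ for the other. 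For (3) $\Rightarrow$ (1), it is remarked in the excerpt that $\cl{CP}(M_n, O(M_n))$ is always right-CP-invariant; the inclusions $\cl{S}(M_n) \subseteq \cl{C} \subseteq \cl{P}(M_n)$ follow because the $n^{th}$ cone of any operator system satisfies $S_n \subseteq C_n \subseteq P_n$ (the latter from the $m=1$, i.e. ${\rm Ad}_A$ with $A \in M_{n,1}$, compression axiom and $C_1 = M_n^+$; the former because separable operators are forced into every operator system's cones).

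For the equivalence with (4), the key computation is Proposition~\ref{prop:compose_right_cp}: it gives $\Psi^\dagger \circ \Phi \in \cl{CP}(M_n)$ for all $\Phi \in \cl{C}$ iff $\Psi \in \cl{C}^\circ$, equivalently $\Xi \circ \Phi \in \cl{CP}(M_n)$ for all $\Phi \in \cl{C}$ iff $\Xi \in (\cl{C}^\circ)^\dagger$. Now given the operator system $O_1(M_n)$ from (2), I would set $O_3(M_n) := O_1(M_n)$ and show $(\cl{C}^\circ)^\dagger = \cl{CP}(O_1(M_n), M_n)$ by again invoking Corollary~\ref{cor:cp_on_cn}, this time with source $O_1(M_n)$ and target $M_n$: $\Xi \in \cl{CP}(O_1(M_n), M_n)$ iff $(id_n \otimes \Xi)(C_n) \subseteq (M_n \otimes M_n)^+$, i.e. iff $\Xi \circ \Phi \in \cl{CP}(M_n)$ for every $\Phi$ whose Choi matrix lies in $C_n = C_{\cl{C}}$, which by the reformulated Proposition~\ref{prop:compose_right_cp} is precisely $\Xi \in (\cl{C}^\circ)^\dagger$. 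For the converse direction (4) $\Rightarrow$ (2), one recovers $C_n$ as $C_{\cl{C}}$ after noting $\cl{C}^{\circ\circ} = \overline{\cl{C}}$, which also explains why $O_3(M_n)$ is determined only by $\overline{\cl{C}}$ rather than $\cl{C}$ itself (dualizing twice loses the distinction between $\cl{C}$ and its closure), whereas $O_1 = O_2$ is pinned down by $C_{\cl{C}}$ directly. Finally I would assemble the uniqueness claims: $O_1(M_n)$ is unique by Proposition~\ref{prop:cn_to_os} since its $n^{th}$ cone $C_{\cl{C}}$ is forced; $O_2 = O_1$ because $\cl{CP}(M_n, O(M_n))$ determines the $n^{th}$ cone of $O(M_n)$ via Corollary~\ref{cor:cp_on_cn}; and $O_3$ may be taken equal to $O_1$ while being uniquely determined by $\overline{\cl{C}}$.

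The main obstacle I anticipate is the careful transpose/flip bookkeeping in translating the map-level composition conditions (left versus right composition by ${\rm Ad}_A$) into the operator-level compression conditions on $C_n$ via Lemma~\ref{lem:left_right_choi} — one has to be scrupulous about where the transpose falls, and in particular to check that ranging $A$ over all of $M_n$ makes the transpose cosmetic, and that the ``only if'' half of Proposition~\ref{prop:compose_right_cp} (which does not use right-CP-invariance) versus the ``if'' half (which does) are invoked in the correct direction when establishing (4). A secondary subtlety is making sure the closure hypotheses match: $\cl{C}$ in statements (1)--(3) need not be closed, but $\cl{CP}(O_3(M_n), M_n)$ is automatically closed, so the dualization $(\cl{C}^\circ)^\dagger$ in (4) is exactly the right object to make the bijection clean, and this is what forces the ``$O_3$ depends only on $\overline{\cl{C}}$'' qualifier.
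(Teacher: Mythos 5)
Your proposal is correct and follows essentially the same route as the paper: translate right-CP-invariance into the compression condition on $C_{\cl{C}}$ via Lemma~\ref{lem:left_right_choi}, invoke Proposition~\ref{prop:cn_to_os} for existence and uniqueness of the operator system, reduce complete positivity to the $n^{th}$ cone via Corollary~\ref{cor:cp_on_cn}, and handle condition (4) through Proposition~\ref{prop:compose_right_cp}; the only difference is cosmetic (you close the cycle as $(3)\Rightarrow(1)$ where the paper shows $(3)\Rightarrow(2)$). Your closing remarks on the transpose bookkeeping and on why $O_3$ is only determined by $\overline{\cl{C}}$ match the paper's treatment.
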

\begin{proof}
	We prove the result by showing that $(1) \Leftrightarrow (2)$, $(2) \Leftrightarrow (3)$, and $(2) \Leftrightarrow (4)$.
	
	To see that $(1) \Rightarrow (2)$, define $C_n := C_{\cl{C}}$. If $A \in M_n$ and $\Phi \in \cl{C}$ then
	\begin{align}\label{eq:cn_to_invariant}
		({\rm Ad}_A \otimes id_n)(C_{\Phi}) & = ({\rm Ad}_A \otimes \Phi)(E) = (id_n \otimes (\Phi \circ {\rm Ad}_{A^T}))(E) \in C_n,
	\end{align}
	where the second equality comes from Lemma~\ref{lem:left_right_choi} and the inclusion comes from the fact that $\cl{C}$ is right-CP-invariant, so $\Phi \circ {\rm Ad}_{A^T} \in \cl{C}$. The implication $(1) \Rightarrow (2)$ and uniqueness of $O_1$ then follow from Proposition~\ref{prop:cn_to_os}. The reverse implication $(2) \Rightarrow (1)$ also follows from the string of equalities~\eqref{eq:cn_to_invariant}, but this time we use the fact that $C_n$ is a cone defining an operator system to get the inclusion. The fact that $\cl{S}(M_n) \subseteq \cl{C} \subseteq \cl{P}(M_n)$ follows from the fact that for the minimal operator system on $M_n$, $C_n$ is the cone of block-positive operators and for the maximal operator system on $M_n$, $C_n$ is the cone of separable operators \cite[Theorem 5]{JKPP11}.
	
	To see that $(2) \Rightarrow (3)$, let $O_2(M_n) = O_1(M_n)$. We then have to show that if $C_{\cl{C}} = C_n$, then $\cl{C} = \cl{CP}(M_n,O_1(M_n))$. We already showed that $(2) \Rightarrow (1)$, so we know that $\cl{C}$ is right-CP-invariant. So if $\Phi \in \cl{C}$ then for any $X \in (M_n \otimes M_n)^{+}$ there exists $\Psi \in \cl{CP}(M_n)$ such that
	\begin{align*}
		(id_n \otimes \Phi)(X) = (id_n \otimes (\Phi \circ \Psi))(E) \in C_n,
	\end{align*}
	where the inclusion comes from $\cl{C}$ being right CP-invariant. It follows via Corollary~\ref{cor:cp_on_cn} that $\Phi \in \cl{CP}(M_n,O_1(M_n))$, so $\cl{C} \subseteq \cl{CP}(M_n,O_1(M_n))$. To see the opposite inclusion, simply note that if $\Phi \in \cl{CP}(M_n,O_1(M_n))$ then, because $E \in (M_n \otimes M_n)^{+}$, we have $C_{\Phi} = (id_n \otimes \Phi)(E) \in C_n = C_{\cl{C}}$, so $\Phi \in \cl{C}$. It follows that $\cl{C} = \cl{CP}(M_n,O_1(M_n))$.
	
	To establish uniqueness of $O_2$ (and simultaneously prove $(3) \Rightarrow (2)$), suppose that the cones $\{D_m\}_{m=1}^{\infty}$ define an operator system $O_2(M_n)$ such that $\cl{C} = \cl{CP}(M_n,O_2(M_n))$. Because $E \in (M_n \otimes M_n)^{+}$, we again have that $(id_n \otimes \Phi)(E) \in D_n$ for any $\Phi \in \cl{C}$, so $C_{\cl{C}} \subseteq D_n$. On the other hand by the equivalence of $(1)$ and $(2)$, $D_n = C_{\cl{C}^{\prime}}$ for some right-CP-invariant cone $\cl{C}^{\prime}$. If $\Phi \in \cl{C}^{\prime}$ then for any $X \in (M_n \otimes M_n)^{+}$ there exists $\Psi \in \cl{CP}(M_n)$ such that
	\begin{align*}
		(id_n \otimes \Phi)(X) = (id_n \otimes (\Phi \circ \Psi))(E) \in D_n,
	\end{align*}
	where the inclusion comes from $\cl{C}^{\prime}$ being right CP-invariant. It follows via Corollary~\ref{cor:cp_on_cn} that $\cl{C}^{\prime} \subseteq \cl{CP}(M_n,O_2(M_n))$. Then $\cl{C} \subseteq \cl{C}^{\prime} \subseteq \cl{CP}(M_n,O_2(M_n)) = \cl{C}$, so $\cl{C} = \cl{C}^{\prime}$ and hence $C_n = D_n$. Uniqueness now follows from Proposition~\ref{prop:cn_to_os}.
	
	The proof that $(2) \Leftrightarrow (4)$ mimics the proof that $(2) \Leftrightarrow (3)$ and makes use of the fact that $\Psi^{\dagger} \circ \Phi \in \cl{CP}(M_n)$ for all $\Phi \in \cl{C}$ if and only if $\Psi \in \cl{C}^{\circ}$ (Proposition~\ref{prop:compose_right_cp}). To see that $(2) \Rightarrow (4)$, let $O_3(M_n) = O_1(M_n)$. Then for any $\Psi \in \cl{C}^{\circ}$ and $\Phi \in \cl{C}$ we have $\Psi^{\dagger} \circ \Phi \in \cl{CP}(M_n)$, so $C_{\Psi^{\dagger} \circ \Phi} \in (M_n \otimes M_n)^{+}$. It follows that $(id_n \otimes \Psi^{\dagger})(C_{n}) \subseteq (M_n \otimes M_n)^{+}$. Corollary~\ref{cor:cp_on_cn} implies that $\Psi^{\dagger} \in \cl{CP}(O_3(M_n),M_n)$, so $(\cl{C}^{\circ})^{\dagger} \subseteq \cl{CP}(O_3(M_n),M_n)$. The opposite inclusion follows by simply reversing this argument.

	Uniqueness of $O_3$ (up to closure) and the implication $(4) \Rightarrow (2)$ follow similarly by the fact that $\Psi^{\dagger} \in \cl{CP}(O_{\cl{C}}(M_n),M_n)$ if and only if $\Psi^{\dagger} \circ \Phi \in \cl{CP}(M_n)$ for all $\Phi \in \cl{C}$ if and only if $\Psi \in \cl{C}^{\circ}$, where $O_{\cl{C}}(M_n)$ is an operator system with its $n^{th}$ cone $C_n := C_{\cl{C}}$.
\end{proof}

The equivalence of conditions (1) and (2) in Theorem~\ref{thm:right_cp_invariant} can be seen as providing a bijection between right-CP-invariant cones and operator systems on $M_n$. Given an operator system $O(M_n)$ defined by cones $\{C_m\}_{m=1}^{\infty}$, the associated right-CP-invariant cone is given via the maps associated to $C_n$ via the Choi-Jamio\l kowski isomorphism. In the other direction, given a right-CP-invariant cone, the associated operator system gets its $n^{th}$ cone from the Choi-Jamio\l kowski isomorphism and then gets its remaining cones via the construction given in Proposition~\ref{prop:cn_to_os}.

\section{Mapping Cones as Operator Systems}\label{sec:opSys}

Before introducing the main results of this section, we present a lemma that shows that the largest cone of completely positive maps between any two operator systems on $M_n$ is the cone of positive maps -- a result that follows from recent work on minimal and maximal operator systems \cite{PTT10,JKPP11}.
\begin{lemma}\label{lem:cp_sub_p}
	Let $O_1(M_n)$ and $O_2(M_n)$ be operator systems. Then $\cl{CP}(O_1(M_n),O_2(M_n)) \subseteq \cl{P}(M_n)$.
\end{lemma}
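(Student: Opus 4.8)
The plan is to use Corollary~\ref{cor:cp_on_cn} to reduce the claim to a statement about the $n^{th}$ cones, and then to exploit the extremal operator system structures. Suppose $\Phi \in \cl{CP}(O_1(M_n),O_2(M_n))$, where $O_1(M_n)$ and $O_2(M_n)$ are defined by cones $\{C_m\}$ and $\{D_m\}$ respectively. By Corollary~\ref{cor:cp_on_cn}, this is equivalent to $(id_n \otimes \Phi)(C_n) \subseteq D_n$. Now I would invoke the fact (cited in the excerpt, \cite[Theorem 5]{JKPP11}) that among all operator systems on $M_n$, the minimal one has $n^{th}$ cone equal to the block-positive operators $P_n$, and the maximal one has $n^{th}$ cone equal to the separable operators $S_n$; hence every operator system satisfies $S_n \subseteq C_n \subseteq P_n$ and likewise $S_n \subseteq D_n \subseteq P_n$.

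The key step is then the chain of inclusions
\begin{align*}
	(id_n \otimes \Phi)(S_n) \subseteq (id_n \otimes \Phi)(C_n) \subseteq D_n \subseteq P_n,
\end{align*}
so it suffices to show that $(id_n \otimes \Phi)(S_n) \subseteq P_n$ forces $\Phi$ to be positive. But this is immediate: $S_n$ contains all operators of the form ${\mathbf v}{\mathbf v}^* \otimes X$ with $X \in M_n^+$, and for such an operator $(id_n \otimes \Phi)({\mathbf v}{\mathbf v}^* \otimes X) = {\mathbf v}{\mathbf v}^* \otimes \Phi(X)$, which lies in $P_n$ if and only if $({\mathbf w}^* \otimes {\mathbf u}^*)({\mathbf v}{\mathbf v}^* \otimes \Phi(X))({\mathbf w} \otimes {\mathbf u}) = |{\mathbf w}^*{\mathbf v}|^2\, {\mathbf u}^* \Phi(X) {\mathbf u} \geq 0$ for all ${\mathbf w},{\mathbf u}$, i.e.\ if and only if $\Phi(X) \geq 0$. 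Choosing ${\mathbf v}$ with ${\mathbf w}^*{\mathbf v} \neq 0$ shows $\Phi(X) \in M_n^+$ for every $X \in M_n^+$, so $\Phi \in \cl{P}(M_n)$, as desired.

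Alternatively, and perhaps more cleanly, one can dualize: since $\cl{CP}(M_n,O_1(M_n)) \subseteq \cl{CP}(M_n, \text{min}(M_n))$ whenever $O_1(M_n)$ has a smaller $n^{th}$ cone, one reduces directly to the case $O_1(M_n) = \text{max}(M_n)$ (separable cone) and $O_2(M_n) = \text{min}(M_n)$ (block-positive cone), and then $\cl{CP}(\text{max}(M_n),\text{min}(M_n))$ is, by definition and Corollary~\ref{cor:cp_on_cn}, exactly $\{\Phi : (id_n \otimes \Phi)(S_n) \subseteq P_n\}$, which by the observation above is precisely $\cl{P}(M_n)$. Either way, I do not anticipate a genuine obstacle here; the only thing requiring care is making sure the extremal $n^{th}$ cones are correctly identified (separable for maximal, block-positive for minimal — note the order-reversal relative to the positive-map / CP-map duality), and invoking Corollary~\ref{cor:cp_on_cn} to pass between the $n^{th}$-cone condition and full complete positivity. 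The $\mathbf{v}\mathbf{v}^* \otimes X$ trick for positivity is the same one already used in the proof of Proposition~\ref{prop:cn_to_os} to show $C_1 = M_n^+$, so it slots in directly.
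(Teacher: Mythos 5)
Your proposal is correct and follows essentially the same route as the paper: both arguments rest on the fact from \cite[Theorem 5]{JKPP11} that $S_n \subseteq C_n$ and $D_n \subseteq P_n$, and then test $\Phi$ on a separable element ($\mathbf{v}\mathbf{v}^* \otimes X$ in your case, $I \otimes X$ in the paper's) to force $\Phi(X) \in M_n^+$. The only cosmetic differences are that the paper argues contrapositively and does not need to invoke Corollary~\ref{cor:cp_on_cn}, since only the trivial (``only if'') direction of that corollary is being used.
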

\begin{proof}
	Let $O_1(M_n)$ and $O_2(M_n)$ be defined by the families of cones $\{C_m\}_{m=1}^{\infty}$ and $\{D_m\}_{m=1}^{\infty}$, respectively. Let $\Phi \in \cl{L}(M_n)$ be such that $\Phi \notin \cl{P}(M_n)$. Because the smallest family of cones defining an operator system on $M_n$ are the separable operators and the largest such family of cones are the block-positive operators \cite[Theorem 5]{JKPP11}, we know that $I \otimes X \in C_n$ for all $X \in M_n^+$ and $D_n \subseteq P_n$. Because $\Phi \notin \cl{P}(M_n)$, there exists a particular $X \in M_n^{+}$ such that $\Phi(X) \notin M_n^+$. It is then easily verified that $I \otimes \Phi(X) \notin P_n$, so $(id_n \otimes \Phi)(I \otimes X) \notin D_n$. It follows that $\Phi \notin \cl{CP}(O_1(M_n),O_2(M_n))$, so $\cl{CP}(O_1(M_n),O_2(M_n)) \subseteq \cl{P}(M_n)$.
\end{proof}

The following result shows how the bijection inroduced by Theorem~\ref{thm:right_cp_invariant} works when the right-CP-invariant cone is in fact a mapping cone -- in this situation the associated operator system is super-homogeneous.

\begin{cor}\label{cor:mapping_cone_os}
	Let $\cl{C} \subseteq \cl{L}(M_n)$ be a closed, convex cone. The following are equivalent:
	\begin{enumerate}
		\item $\cl{C}$ is a mapping cone.
		\item There exists a super-homogeneous operator system $O_1(M_n)$, defined by cones\\
		$\{C_m\}_{m=1}^{\infty}$, such that $C_{\cl{C}} = C_n$.
		\item There exists a super-homogeneous operator system $O_2(M_n)$ such that\\
		$\cl{C} = \cl{CP}(M_n,O_2(M_n))$.
		\item There exists a super-homogeneous operator system $O_3(M_n)$ such that\\
		$(\cl{C}^{\circ})^{\dagger} = \cl{CP}(O_3(M_n),M_n)$.
		\item There exist super-homogeneous operator systems $O_4(M_n)$ and $O_5(M_n)$ such that \\
		$\cl{C} = \cl{CP}(O_4(M_n),O_5(M_n))$.
	\end{enumerate}
	Furthermore, $O_1(M_n) = O_2(M_n) = O_3(M_n)$ and is uniquely determined by $\cl{C}$.
\end{cor}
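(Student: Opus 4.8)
The plan is to derive this corollary from Theorem~\ref{thm:right_cp_invariant} by following one extra piece of data through the bijection: the associated operator system is super-homogeneous precisely when the cone is also \emph{left}-CP-invariant. The key elementary observation is that left-composition by ${\rm Ad}_B$ becomes, under the Choi--Jamio\l kowski isomorphism, the operation $id_n \otimes {\rm Ad}_B$ on the Choi matrix, since
\begin{align*}
	C_{{\rm Ad}_B \circ \Phi} = \big(id_n \otimes ({\rm Ad}_B \circ \Phi)\big)(E) = (id_n \otimes {\rm Ad}_B)(C_\Phi).
\end{align*}
As the Choi--Jamio\l kowski isomorphism is a bijection, for a convex cone $\cl{C}$ with $C_{\cl{C}} = C_n$ the condition ``$(id_n \otimes {\rm Ad}_B)(C_n) \subseteq C_n$ for all $B \in M_n$'' is equivalent to ``${\rm Ad}_B \circ \Phi \in \cl{C}$ for all $B \in M_n$ and $\Phi \in \cl{C}$'', which (by linearity, exactly as in the remarks on mapping cones in Section~\ref{sec:cone_prelims}) is equivalent to $\cl{C}$ being left-CP-invariant. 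Together with the characterization of super-homogeneity in Proposition~\ref{prop:cn_to_os}, this shows that the operator system $O_1(M_n)$ furnished by Theorem~\ref{thm:right_cp_invariant} is super-homogeneous if and only if $\cl{C}$ is left-CP-invariant.

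Granting this, $(1) \Leftrightarrow (2)$ is short. If $\cl{C}$ is a mapping cone then it is a nonzero, closed, right-CP-invariant cone in $\cl{P}(M_n)$, and it contains $\cl{S}(M_n)$ --- one obtains every superpositive map by sandwiching a fixed nonzero $\Phi \in \cl{C}$ between rank-one ${\rm Ad}$-maps (using both left- and right-CP-invariance) and taking convex combinations. Hence Theorem~\ref{thm:right_cp_invariant} applies and produces the unique $O_1(M_n)$ with $C_{\cl{C}} = C_n$, and the paragraph above matches left-CP-invariance of $\cl{C}$ with super-homogeneity of $O_1(M_n)$. Conversely, if $(2)$ holds then Theorem~\ref{thm:right_cp_invariant} gives that $\cl{C}$ is right-CP-invariant with $\cl{S}(M_n) \subseteq \cl{C} \subseteq \cl{P}(M_n)$ (so nonzero), super-homogeneity gives left-CP-invariance, and $\cl{C}$ is closed by hypothesis, so $\cl{C}$ is a mapping cone. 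The equivalences $(2) \Leftrightarrow (3)$ and $(2) \Leftrightarrow (4)$, as well as $O_1(M_n) = O_2(M_n) = O_3(M_n)$ with uniqueness, then follow from the corresponding parts of Theorem~\ref{thm:right_cp_invariant}: there $O_2$ and $O_3$ may be taken equal to $O_1$, and by the uniqueness in that theorem (the ``up to closure'' caveat being vacuous since $\cl{C}$ is closed) their $n^{th}$ cones are forced to be $C_{\cl{C}}$, so super-homogeneity transfers to them.

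It remains to fold in $(5)$. The implication $(3) \Rightarrow (5)$ is immediate: take $O_4(M_n)$ to be the standard operator system $M_n$ (super-homogeneous, as $\cl{CP}(M_n) \subseteq \cl{CP}(M_n)$) and $O_5(M_n) = O_2(M_n)$. For $(5) \Rightarrow (1)$, let $\cl{C} = \cl{CP}(O_4(M_n),O_5(M_n))$ with $O_4,O_5$ super-homogeneous. Then $\cl{C} \subseteq \cl{P}(M_n)$ by Lemma~\ref{lem:cp_sub_p} and $\cl{C}$ is closed by hypothesis; right-CP-invariance follows from super-homogeneity of $O_4$ (if $\Phi \in \cl{C}$ and $\Psi \in \cl{CP}(M_n) \subseteq \cl{CP}(O_4(M_n))$ then $\Phi \circ \Psi \in \cl{CP}(O_4(M_n),O_5(M_n)) = \cl{C}$) and left-CP-invariance follows symmetrically from super-homogeneity of $O_5$; and $\cl{C}$ is nonzero because every superpositive map lies in it --- a superpositive map, being a sum of maps ${\rm Ad}_A$ with $A$ of rank one, carries block-positive operators to separable ones, and since every operator system on $M_n$ has $S_m \subseteq C_m \subseteq P_m$ this forces such a map into $\cl{CP}(O_4(M_n),O_5(M_n))$ (by Corollary~\ref{cor:cp_on_cn} one need only check this at level $n$). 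Thus $\cl{C}$ is a nonzero closed cone of positive maps that is both left- and right-CP-invariant, i.e.\ a mapping cone. I expect the only work beyond Theorem~\ref{thm:right_cp_invariant} to be the Choi-matrix reformulation of left-CP-invariance, which is what makes its equivalence with super-homogeneity transparent, together with the routine but slightly fiddly bookkeeping for condition $(5)$; there is no serious obstacle, as the corollary is essentially Theorem~\ref{thm:right_cp_invariant} with the super-homogeneity property threaded through.
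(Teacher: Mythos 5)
Your proposal is correct and follows essentially the same route as the paper: the equivalence of (1)--(4) is obtained from Theorem~\ref{thm:right_cp_invariant} by matching left-CP-invariance of $\cl{C}$ with the condition $(id_n \otimes {\rm Ad}_B)(C_{\cl{C}}) \subseteq C_{\cl{C}}$ and hence, via Proposition~\ref{prop:cn_to_os}, with super-homogeneity, and (5) is handled by $(3) \Rightarrow (5) \Rightarrow (1)$. Your treatment of $(5) \Rightarrow (1)$ argues the left- and right-CP-invariance directly at the level of the cones $C_m$, $D_m$ rather than routing through the mapping cones associated to $O_4$ and $O_5$ as the paper does, and you make explicit two points the paper leaves implicit (that every mapping cone contains $\cl{S}(M_n)$, and that $\cl{CP}(O_4(M_n),O_5(M_n))$ is nonzero); these are welcome but minor refinements of the same argument.
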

\begin{proof}
	The equivalence of $(1)$, $(2)$, $(3)$, and $(4)$ (and uniqueness of the corresponding operator systems) follows immediately from the corresponding statements of Theorem~\ref{thm:right_cp_invariant} and the fact that $\cl{C}$ is left-CP-invariant if and only if $(id_n \otimes {\rm Ad}_B)(C_{\cl{C}}) \subseteq C_{\cl{C}}$, which then gives super-homogeneity of the corresponding operator system via Proposition~\ref{prop:cn_to_os}.
	
	Because $M_n$ is a super-homogeneous operator system, it is clear that $(3) \Rightarrow (5)$. All that remains to do is prove that $(5) \Rightarrow (1)$. To this end, let $O_4(M_n)$ and $O_5(M_n)$ be super-homogeneous operator systems defined by families of cones $\{C_m\}_{m=1}^{\infty}$ and $\{D_m\}_{m=1}^{\infty}$, respectively. By the equivalence of conditions $(1)$ and $(2)$, we know that there exist mapping cones $\cl{C}^{\prime}$ and $\cl{C}^{\prime\prime}$ such that $C_{\cl{C}^{\prime}} = C_n$ and $C_{\cl{C}^{\prime\prime}} = D_n$. By Corollary~\ref{cor:cp_on_cn}, $(id_n \otimes \Phi)(C_n) \subseteq D_n$ if and only if $\Phi \in \cl{CP}(O_4(M_n),O_5(M_n))$, so it follows that $\Phi \circ \Psi \in \cl{C}^{\prime\prime}$ for all $\Psi \in \cl{C}^{\prime}$ if and only if $\Phi \in \cl{CP}(O_4(M_n),O_5(M_n))$. Right-CP-invariance of $\cl{CP}(O_4(M_n),O_5(M_n))$ now follows from left-CP-invariance of $\cl{C}^{\prime}$ and left-CP-invariance of $\cl{CP}(O_4(M_n),O_5(M_n))$ follows from left-CP-invariance of $\cl{C}^{\prime\prime}$. The fact that $\cl{CP}(O_4(M_n),O_5(M_n)) \subseteq \cl{P}(M_n)$ follows from Lemma~\ref{lem:cp_sub_p}.
\end{proof}

It is natural at this point to consider well-known mapping cones and ask what are the corresponding operator systems via the bijection of Corollary~\ref{cor:mapping_cone_os}. The mapping cone of standard completely positive maps $\cl{CP}(M_n)$ of course corresponds to the ``naive'' operator system with positive cones equal to the cones of positive semidefinite operators. It was shown in \cite{PTT10} that $\cl{S}(M_n) = \cl{CP}(M_n,OMAX(M_n))$, where $OMAX(M_n)$ is the maximal operator system structure on $M_n$. It follows that the operator system associated with the mapping cone $\cl{S}(M_n)$ is $OMAX(M_n)$, and the cones that define $OMAX(M_n)$ are exactly the cones of separable operators. It was similarly shown that $\cl{S}(M_n) = \cl{CP}(OMIN(M_n),M_n)$, where $OMIN(M_n)$ is the minimal operator system structure on $M_n$. It follows from condition~(4) of Corollary~\ref{cor:mapping_cone_os} (and the fact that $\cl{S}(M_n) = (\cl{P}(M_n)^\circ)^\dagger$) that the operator system associated with the mapping cone $\cl{P}(M_n)$ is $OMIN(M_n)$, and the cones that define $OMIN(M_n)$ are the cones of block-positive operators.

It was shown in \cite{JKPP11} that if $OMIN_k(M_n)$ and $OMAX_k(M_n)$ denote the super $k$-minimal and super $k$-maximal operator systems on $M_n$ \cite{Xthesis}, respectively, then we have that $\cl{P}_k(M_n) = \cl{CP}(M_n,OMIN_k(M_n))$ and $\cl{S}_k(M_n) = \cl{CP}(M_n,OMAX_k(M_n))$. Thus the operator systems associated with the mapping cones $\cl{P}_k(M_n)$ and $\cl{S}_k(M_n)$ are $OMIN_k(M_n)$ and $OMAX_k(M_n)$, respectively. Finally, consider the mapping cone of completely co-positive maps $\{\Phi \circ T : \Phi \in \cl{CP}(M_n)\}$. It is not difficult to see that the associated operator system is the one defined by the cones of operators with positive partial transpose -- i.e., the operators $X \in M_m \otimes M_n$ such that $(id_m \otimes T)(X) \geq 0$.

We close this section by considering what Corollary~\ref{cor:mapping_cone_os} says in the case when the mapping cone $\cl{C}$ is \emph{symmetric} -- that is, when $T \circ \Phi \circ T \in \cl{C}$ and $\Phi^{\dagger} \in \cl{C}$ whenever $\Phi \in \cl{C}$. The concept of symmetric mapping cones was seen to be important in \cite{S11}, and it is worth noting that all of the specific mapping cones considered so far, such as the cones of $k$-positive and completely co-positive maps, are in fact symmetric. It will be useful for us to define a linear operator $F \in M_n \otimes M_n$ by $F({\mathbf v} \otimes {\mathbf w}) = {\mathbf w} \otimes {\mathbf v}$ and extending linearly. The operator $F$ is sometimes called the \emph{swap} or \emph{flip} operator, and we observe that $F = F^T$.

\begin{thm}\label{thm:symmetric_cone_os}
	Let $\cl{C} \subseteq \cl{L}(M_n)$ be a convex mapping cone and let $O(M_n)$ be the operator system, defined by cones $\{C_m\}_{m=1}^{\infty}$, associated to $\cl{C}$ via the bijection of Corollary~\ref{cor:mapping_cone_os}. Then $\cl{C}$ is symmetric if and only if $C_n$ is closed under the transpose map and the map $X \mapsto FXF$.
\end{thm}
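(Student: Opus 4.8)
The plan is to translate the two symmetry conditions on the mapping cone $\cl{C}$ — namely $\Phi \in \cl{C} \Rightarrow T \circ \Phi \circ T \in \cl{C}$ and $\Phi \in \cl{C} \Rightarrow \Phi^{\dagger} \in \cl{C}$ — into statements about $C_n = C_{\cl{C}}$ under the transpose $X \mapsto X^T$ on $M_n \otimes M_n$ and the conjugation $X \mapsto FXF$, and then observe that these two pairs of conditions match up. The workhorse is the identity relating Choi matrices under pre/post-composition with maps, together with Lemma~\ref{lem:left_right_choi}. I would handle the two conditions separately.

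First I would treat the $T \circ \Phi \circ T$ condition. The key computation is to express $C_{T \circ \Phi \circ T}$ in terms of $C_\Phi$. One has $C_{T \circ \Phi \circ T} = (id_n \otimes (T \circ \Phi \circ T))(E) = (id_n \otimes T)\big((id_n \otimes \Phi)((id_n \otimes T)(E))\big)$. Since $(id_n \otimes T)(E) = F$ (this is immediate from $E = \sum_{i,j} {\mathbf e_i}{\mathbf e_j}^* \otimes {\mathbf e_i}{\mathbf e_j}^*$ and $F = \sum_{i,j} {\mathbf e_i}{\mathbf e_j}^* \otimes {\mathbf e_j}{\mathbf e_i}^*$), and since $F = (\sum_i {\mathbf e_i} \otimes {\mathbf e_i})(\sum_j {\mathbf e_j}^* \otimes {\mathbf e_j}^*)\cdots$ — more usefully, writing things out or invoking the standard fact — one gets $C_{T \circ \Phi \circ T} = (id_n \otimes T)(F C_\Phi F)$, i.e. $(C_{T\circ\Phi\circ T})$ equals $FXF$ followed by a partial transpose, or equivalently the full transpose composed with $FXF$. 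After untangling this I expect the clean statement: $C_{\cl{C}}$ is closed under $X \mapsto FX^T F$ (equivalently under the full transpose composed with $X \mapsto FXF$) if and only if $T \circ \Phi \circ T \in \cl{C}$ for all $\Phi \in \cl{C}$. Separately, using Lemma~\ref{lem:left_right_choi}, $C_{\Phi^\dagger} = (id_n \otimes \Phi^\dagger)(E)$ is related to $(id_n \otimes \Phi)(E)$ by $C_{\Phi^\dagger} = (T \otimes T)(F C_\Phi F)$ or some such; tracking the exact swap/transpose combination, one concludes that $\Phi \mapsto \Phi^\dagger$ corresponds to $X \mapsto (X^T)$ combined with $X\mapsto FXF$ in a way complementary to the first condition.

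The combinatorial heart of the argument is then: the four operations available on Choi matrices are generated by the transpose $\tau: X \mapsto X^T$ and the flip-conjugation $\varphi: X \mapsto FXF$, which commute (since $F = F^T$ gives $\varphi(X)^T = F^T X^T F^T = F X^T F = \varphi(X^T)$) and are each involutions, so they generate a group isomorphic to $\bb{Z}_2 \times \bb{Z}_2$. The condition "$T \circ \Phi \circ T \in \cl{C}$ for all $\Phi$" will correspond to closure of $C_n$ under one of $\{\tau, \varphi, \tau\varphi\}$, and "$\Phi^\dagger \in \cl{C}$" to closure under a different one; since any two distinct non-identity elements of $\bb{Z}_2\times\bb{Z}_2$ generate the whole group, having both is equivalent to closure under all of $\tau$, $\varphi$, and $\tau\varphi$, which is precisely "closed under the transpose map and the map $X \mapsto FXF$." I would also note that the hypothesis that $\cl{C}$ is a mapping cone (hence $\Phi^\dagger \in \cl{C}$ for $\Phi$ ranging over $\cl{C}$ is a genuine extra condition, not automatic) is what makes the statement nontrivial, and that well-definedness of these operations at the level of cones-of-Choi-matrices (as opposed to individual maps) follows because $C_{\cl{C}} = \{C_\Phi : \Phi \in \cl{C}\}$ and the Choi-Jamio\l kowski isomorphism is a linear bijection.

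The step I expect to be the main obstacle is pinning down the precise transpose-and-swap bookkeeping in the two displayed Choi-matrix identities — that is, verifying exactly which of $\tau$, $\varphi$, $\tau\varphi$ corresponds to $T\circ\Phi\circ T$ and which to $\Phi^\dagger$, and in particular confirming that they are two \emph{distinct} non-identity elements (if they coincided, the two conditions would be equivalent to each other and to closure under a single involution, giving a weaker conclusion). Once the correspondence is $\tau\varphi \leftrightarrow (T\circ\Phi\circ T)$ and $\varphi \leftrightarrow \Phi^\dagger$ (or whichever distinct pair it turns out to be), the group-theoretic conclusion is immediate. Everything else is routine manipulation with $E$, $F$, and the Kraus-type representations of $\Phi$ and $\Phi^\dagger$ already recorded in Lemma~\ref{lem:left_right_choi}.
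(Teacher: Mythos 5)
Your overall architecture---translate each of the two symmetry conditions into closure of $C_n = C_{\cl{C}}$ under some element of the Klein four-group generated by $\tau : X \mapsto X^T$ and $\varphi : X \mapsto FXF$, then note that any two distinct non-identity elements of $\bb{Z}_2 \times \bb{Z}_2$ generate the whole group---is exactly the paper's proof. The problem is that the load-bearing step, namely which element corresponds to which condition, is precisely the one you leave open, and both of your tentative computations land on answers that would sink the argument. The correct identities are $C_{T \circ \Phi \circ T} = C_{\Phi}^T$ (this is \cite[Lemma 4]{S09}; your intermediate claim $(id_n \otimes \Phi)(F) = F C_{\Phi} F$ is false---test it on $\Phi = id_n$, where the left side is $F$ while $F E F = E$, and $F \neq E$) and, combining that with Lemma~\ref{lem:left_right_choi} in the form $C_{T \circ \Phi^{\dagger} \circ T} = F C_{\Phi} F$, the identity $C_{\Phi^{\dagger}} = F C_{\Phi}^T F$. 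Thus the condition ``$T \circ \Phi \circ T \in \cl{C}$'' corresponds to $\tau$ and the condition ``$\Phi^{\dagger} \in \cl{C}$'' to $\tau\varphi$; these are distinct non-identity involutions, and $\langle \tau, \tau\varphi \rangle = \langle \tau, \varphi \rangle$ closes the proof exactly as you envision.

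By contrast, your guessed assignments ($X \mapsto FX^TF$ for the first condition, and $(T \otimes T)(FXF) = FX^TF$ for the second) both equal $\tau\varphi$, so by your own correctly flagged caveat they would yield only closure under the single involution $X \mapsto FX^TF$---strictly weaker than the stated conclusion. (Your other candidate for the first condition, the partial transpose $(id_n \otimes T)(FXF)$, is not even an element of the group generated by $\tau$ and $\varphi$, so the group-theoretic endgame would not apply to it at all.) So the gap is genuine: the two Choi-matrix identities are the entire content of the theorem, they are not the ones you wrote down, and without the correct versions the proof does not close. Everything else in your outline---commutativity of $\tau$ and $\varphi$ via $F = F^T$, well-definedness at the level of cones via linearity of the Choi--Jamio{\l}kowski isomorphism, and the observation that closure under $\Phi \mapsto \Phi^{\dagger}$ is a genuine extra hypothesis---is fine and matches the paper.
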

\begin{proof}
	The proof relies on Lemma~\ref{lem:left_right_choi} which tells us that $C_{T \circ \Phi^{\dagger} \circ T} = FC_{\Phi}F$, and \cite[Lemma 4]{S09} which tells us that $C_{T \circ \Phi \circ T} = C_{\Phi}^T$. Combining these two results shows that $C_{\Phi^{\dagger}} = FC_{\Phi}^{T}F$. It then follows immediately that $T \circ \Phi \circ T \in \cl{C}$ whenever $\Phi \in \cl{C}$ if and only if $C_n$ (which equals $C_{\cl{C}}$) is closed under the transpose map $T$. Similarly, $\Phi^{\dagger} \in \cl{C}$ whenever $\Phi \in \cl{C}$ if and only if $C_n$ is closed under the map $X \mapsto FX^{T}F$. The result follows.
\end{proof}

\section{Semigroup Cones as Operator Systems}\label{sec:semigroups}

Theorem~\ref{thm:right_cp_invariant} and Corollary~\ref{cor:mapping_cone_os} provide characterizations of completely positive maps to and from $M_n$, and completely positive maps between two different super-homogeneous operator systems on $M_n$. However, they say nothing about completely positive maps from a super-homogeneous operator system back into itself. Toward deriving a characterization for this situation, we will say that a cone $\cl{C} \subseteq \cl{L}(M_n)$ is a \emph{semigroup} if it is closed under composition -- i.e., if $\Phi \circ \Psi \in \cl{C}$ for all $\Phi,\Psi \in \cl{C}$. Notice that many of the standard examples of mapping cones, such as the $k$-positive maps and the $k$-superpositive maps, are semigroups (however, the cone of completely co-positive maps is not).

The following proposition is a generalization of the fact that $\Phi$ is $k$-positive if and only if $\Phi \circ \Psi$ is $k$-superpositive for all $k$-superpositive $\Psi$ \cite[Theorem 3.8]{SSZ09}. Note that it is similar to Proposition~\ref{prop:compose_right_cp}, but by using the fact that $\cl{C}$ is a semigroup instead of just right-CP-invariant or a mapping cone we are able to show that $\Phi^{\dagger} \circ \Psi \in \cl{C}^{\circ}$, not just that $\Phi^{\dagger} \circ \Psi \in \cl{CP}(M_n)$.


\begin{prop}\label{prop:semigroup}
	Let $\cl{C} \supseteq \cl{CP}(M_n)$ be a closed convex cone semigroup. Then $\Phi \in \cl{C}$ if and only if $\Phi^{\dagger} \circ \Psi \in \cl{C}^{\circ}$ for all $\Psi \in \cl{C}^{\circ}$.
\end{prop}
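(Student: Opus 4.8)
The plan is to prove both implications by exploiting the semigroup hypothesis together with the bidual identity $\cl{C}^{\circ\circ} = \overline{\cl{C}} = \cl{C}$ (valid since $\cl{C}$ is closed and convex), and the dual-pairing computation already used in the proof of Proposition~\ref{prop:compose_right_cp}, namely that $\Tr(C_\Psi C_\Phi) = \Tr(E\, C_{\Psi^\dagger \circ \Phi})$, so that $\Psi^\dagger \circ \Phi \in \cl{CP}(M_n)$ for \emph{all} $\Phi$ in a cone $\cl{D}$ is equivalent to $\Psi \in \cl{D}^\circ$. The key structural fact I would isolate first is that $\cl{CP}(M_n) \subseteq \cl{C}$ forces $\cl{C}^\circ \subseteq \cl{CP}(M_n)^\circ = \cl{CP}(M_n)$ (self-duality of the completely positive cone under the Choi pairing), so every element of $\cl{C}^\circ$ is itself completely positive; this will let me freely compose elements of $\cl{C}^\circ$ with things in $\cl{C}$ and stay inside $\cl{C}$.

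For the ``only if'' direction, assume $\Phi \in \cl{C}$ and take any $\Psi \in \cl{C}^\circ \subseteq \cl{CP}(M_n) \subseteq \cl{C}$. Then $\Phi^\dagger \in \cl{C}$ as well? No --- I do not want to assume $\cl{C}$ is closed under adjoint. Instead I would argue directly at the level of the dual pairing: to show $\Phi^\dagger \circ \Psi \in \cl{C}^\circ$ it suffices, by definition of the dual cone, to check $\Tr(C_{\Phi^\dagger \circ \Psi}\, C_\Theta) \ge 0$ for every $\Theta \in \cl{C}$. Using the identity $\Tr((id_n \otimes \Phi^\dagger \circ \Psi)(E)\,(id_n\otimes\Theta)(E)) = \Tr((id_n\otimes\Psi)(E)\,(id_n\otimes(\Phi\circ\Theta))(E)) = \Tr(C_\Psi\, C_{\Phi\circ\Theta})$ --- exactly the manipulation from the proof of Proposition~\ref{prop:compose_right_cp}, transporting the $\Phi^\dagger$ across the pairing to a $\Phi$ on the other side --- this reduces to showing $\Tr(C_\Psi\, C_{\Phi\circ\Theta}) \ge 0$. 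But $\Phi \circ \Theta \in \cl{C}$ because $\cl{C}$ is a semigroup and $\Phi,\Theta \in \cl{C}$, and $\Psi \in \cl{C}^\circ$, so this inner product is nonnegative by definition of $\cl{C}^\circ$. Hence $\Phi^\dagger \circ \Psi \in \cl{C}^\circ$ for all $\Psi \in \cl{C}^\circ$.

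For the ``if'' direction, suppose $\Phi^\dagger \circ \Psi \in \cl{C}^\circ$ for all $\Psi \in \cl{C}^\circ$. Since $\cl{C} = \cl{C}^{\circ\circ}$, it is enough to show $\Tr(C_\Phi\, C_\Psi) \ge 0$ for every $\Psi \in \cl{C}^\circ$. Because $\cl{CP}(M_n) \subseteq \cl{C}$, the identity map $id_n$ lies in $\cl{C}^\circ$ precisely when... actually the cleaner route: fix $\Psi \in \cl{C}^\circ$; by hypothesis $\Phi^\dagger \circ \Psi \in \cl{C}^\circ \subseteq \cl{CP}(M_n)$, so $C_{\Phi^\dagger\circ\Psi} \in (M_n\otimes M_n)^+$, and then $0 \le \Tr(E\, C_{\Phi^\dagger \circ \Psi}) = \Tr(C_\Psi\, C_\Phi)$ by the same pairing identity. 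Since this holds for all $\Psi \in \cl{C}^\circ$, we get $\Phi \in \cl{C}^{\circ\circ} = \cl{C}$. I expect the main obstacle to be bookkeeping the adjoints and transposes correctly in the pairing identity $\Tr(C_{\Phi^\dagger\circ\Psi} C_\Theta) = \Tr(C_\Psi C_{\Phi\circ\Theta})$ --- verifying that the $\Phi^\dagger$ genuinely moves across to become a left composition by $\Phi$, which is where the $E$-trick and Lemma~\ref{lem:left_right_choi} may be needed --- and confirming that $\cl{C}^\circ \subseteq \cl{CP}(M_n)$ from $\cl{CP}(M_n) \subseteq \cl{C}$, which relies on the self-duality of the completely positive cone.
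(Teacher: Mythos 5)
Your proof is correct and follows essentially the same route as the paper: the ``only if'' direction is the identical computation $\Tr(C_{\Phi^{\dagger}\circ\Psi}C_{\Theta}) = \Tr(C_{\Psi}C_{\Phi\circ\Theta}) \geq 0$ using the semigroup property, and the ``if'' direction pairs against $E = C_{id_n}$ and invokes $\cl{C}^{\circ\circ} = \cl{C}$. The only (immaterial) difference is that you justify $\Tr(E\,C_{\Phi^{\dagger}\circ\Psi}) \geq 0$ via $\cl{C}^{\circ} \subseteq \cl{CP}(M_n)$ and self-duality of the CP cone, whereas the paper gets it directly from $id_n \in \cl{C}$ and $\Phi^{\dagger}\circ\Psi \in \cl{C}^{\circ}$.
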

\begin{proof}
	To show the ``only if'' direction, it is enough to show that $\Tr(C_{\Phi^{\dagger} \circ \Psi}C_{\Omega}) \geq 0$ for all $\Omega \in \cl{C}$. To this end, simply note that
	\begin{align*}
		\Tr(C_{\Phi^{\dagger} \circ \Psi}C_{\Omega}) = \Tr(C_{\Psi}C_{\Phi \circ \Omega}) \geq 0,
	\end{align*}
	where the final inequality follows from the fact that $\Phi,\Omega \in \cl{C}$ so $\Phi \circ \Omega \in \cl{C}$.
	
	To see the ``if'' direction, suppose $\Phi^{\dagger} \circ \Psi \in \cl{C}^{\circ}$ for all $\Psi \in \cl{C}^{\circ}$. Then, because $id_n \in \cl{CP}(M_n) \subseteq \cl{C}$, we have
	\begin{align*}
		0 \leq \Tr(C_{\Phi^{\dagger} \circ \Psi}E) & = \Tr(C_{\Psi}C_{\Phi}) \quad \forall \, \Psi \in \cl{C}^{\circ}.	
	\end{align*}
	It follows that $\Phi \in \cl{C}^{\circ \circ} = \cl{C}$.
\end{proof}

If $O(M_n)$ is an operator system defined by cones $\{C_m\}_{m=1}^{\infty}$, then the dual cones $\{C_m^{\circ}\}_{m=1}^{\infty}$ define an operator system as well, which we will denote $O^{\circ}(M_n)$. For simplicity, we will only consider this operator system as a family of dual cones, in keeping with our focus throughout the preceding portion of the paper, and not the associated dual operator space structure. The interested reader is directed to \cite{BM10} for a more thorough treatment of dual operator systems. It is easily verified that $O(M_n)$ is super-homogeneous if and only if $O^{\circ}(M_n)$ is super-homogeneous, and the ``naive'' operator system on $M_n$ is easily seen to be self-dual: $M_n^{\circ} = M_n$. By the duality of the cones of $k$-positive maps and $k$-superpositive maps we know that $OMIN_{k}^{\circ}(M_n) = OMAX_k(M_n)$ and $OMAX_{k}^{\circ}(M_n) = OMIN_k(M_n)$.

We now consider what types of cones can be completely positive from a super-homogeneous operator system back into itself. By using \cite[Theorem 5]{JKPP11} and the fact that $\cl{P}_k(M_n)$ is a semigroup, it is not difficult to see that $\cl{CP}(OMIN_k(M_n)) = \cl{P}_k(M_n)$. By using \cite[Theorem 3.8]{SSZ09} we can similarly see that $\cl{CP}(OMAX_k(M_n)) = \cl{P}_k(M_n)$, so we can't possibly hope for a uniqueness result as strong as that of Theorem~\ref{thm:right_cp_invariant} or Corollary~\ref{cor:mapping_cone_os} in this setting. Nonetheless, we have the following result, which shows that duality plays a strong role here and the fact that $\cl{CP}(OMIN_k(M_n)) = \cl{CP}(OMAX_k(M_n))$ follows from the duality of $OMIN_k(M_n)$ and $OMAX_k(M_n)$. Furthermore, there is a unique operator system that gives the cone $\cl{CP}(O(M_n))$ that is ``large enough'' to contain $(M_n \otimes M_n)^{+}$ as a subset of its $n^{th}$ cone -- in this case it is $OMIN_k(M_n)$.

\begin{thm}\label{thm:cp_semigroup}
	Let $\cl{C} \subseteq \cl{L}(M_n)$ be a convex cone. The following are equivalent:
	\begin{enumerate}
		\item $\cl{C}$ is a semigroup cone with $\cl{CP}(M_n) \subseteq \cl{C} \subseteq \cl{P}(M_n)$.
		\item There exists a super-homogeneous operator system $O(M_n)$ such that $\cl{C} = \cl{CP}(O(M_n))$.
	\end{enumerate}
	Additionally, $\cl{CP}(O^{\circ}(M_n)) = \overline{\cl{CP}(O(M_n))}^{\dagger}$ and $O(M_n)$ can be chosen so that its $n^{th}$ cone $C_n = C_{\cl{C}}$. Furthermore, $O(M_n)$ is unique up to the condition $(M_n \otimes M_n)^{+} \subseteq C_n$.
\end{thm}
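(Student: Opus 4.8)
The plan is to prove the equivalence $(1) \Leftrightarrow (2)$ by routing through the results already established, and then to extract the duality identity and the uniqueness statement. For the direction $(2) \Rightarrow (1)$: given a super-homogeneous operator system $O(M_n)$ with cones $\{C_m\}$, Lemma~\ref{lem:cp_sub_p} immediately gives $\cl{CP}(O(M_n)) \subseteq \cl{P}(M_n)$, and super-homogeneity gives $\cl{CP}(M_n) \subseteq \cl{CP}(O(M_n))$; the semigroup property is immediate since if $(id_m \otimes \Phi)(C_m) \subseteq C_m$ and $(id_m \otimes \Psi)(C_m) \subseteq C_m$ then $(id_m \otimes (\Phi \circ \Psi))(C_m) = (id_m \otimes \Phi)((id_m \otimes \Psi)(C_m)) \subseteq C_m$. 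For $(1) \Rightarrow (2)$: given such a $\cl{C}$, set $C_n := C_{\cl{C}}$. Since $\cl{CP}(M_n) \subseteq \cl{C} \subseteq \cl{P}(M_n)$ we have $S_n \subseteq C_n \subseteq P_n$, and since $\cl{C}$ is a semigroup containing $\cl{CP}(M_n)$ it is both left- and right-CP-invariant, so by the computation~\eqref{eq:cn_to_invariant} (and its analogue using $id_n \otimes {\rm Ad}_B$) the cone $C_n$ satisfies $({\rm Ad}_A \otimes id_n)(C_n) \subseteq C_n$ and $(id_n \otimes {\rm Ad}_B)(C_n) \subseteq C_n$ for all $A,B \in M_n$. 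Proposition~\ref{prop:cn_to_os} then produces a super-homogeneous operator system $O(M_n)$ with $n^{th}$ cone $C_n$. The remaining task is to verify $\cl{CP}(O(M_n)) = \cl{C}$. By Corollary~\ref{cor:cp_on_cn}, $\Phi \in \cl{CP}(O(M_n))$ iff $(id_n \otimes \Phi)(C_n) \subseteq C_n$, i.e.\ iff $C_{\Phi \circ \Psi} \in C_{\cl{C}}$ for all $\Psi \in \cl{C}$, i.e.\ iff $\Phi \circ \Psi \in \cl{C}$ for all $\Psi \in \cl{C}$ (using that $\cl{C}$ is closed so $C_{\cl{C}}$ determines membership). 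Since $\cl{C}$ is a semigroup this holds for every $\Phi \in \cl{C}$, giving $\cl{C} \subseteq \cl{CP}(O(M_n))$; conversely taking $\Psi = id_n \in \cl{C}$ shows any $\Phi \in \cl{CP}(O(M_n))$ satisfies $\Phi = \Phi \circ id_n \in \cl{C}$, so $\cl{CP}(O(M_n)) \subseteq \cl{C}$. (The same circle of ideas, read off Proposition~\ref{prop:semigroup}, gives an alternate route: $\Phi \in \cl{C}$ iff $\Phi^{\dagger} \circ \Psi \in \cl{C}^{\circ}$ for all $\Psi \in \cl{C}^{\circ}$.)

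For the duality identity $\cl{CP}(O^{\circ}(M_n)) = \overline{\cl{CP}(O(M_n))}^{\dagger}$, the plan is to combine Corollary~\ref{cor:cp_on_cn} with Proposition~\ref{prop:compose_right_cp} and Lemma~\ref{lem:left_right_choi}. By Corollary~\ref{cor:cp_on_cn}, $\Psi \in \cl{CP}(O^{\circ}(M_n))$ iff $(id_n \otimes \Psi)(C_n^{\circ}) \subseteq C_n^{\circ}$, which by the definition of the dual cone means $\Tr\big((id_n \otimes \Psi)(X) \cdot Y\big) \geq 0$ for all $X \in C_n^{\circ}$, $Y \in C_n$. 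Dualizing the action of $id_n \otimes \Psi$ moves this to a condition $\Tr(X \cdot (id_n \otimes \Psi^{\dagger})(Y)) \geq 0$, which holds for all $X \in C_n^{\circ}$ exactly when $(id_n \otimes \Psi^{\dagger})(Y) \in C_n^{\circ\circ} = \overline{C_n}$ for all $Y \in C_n$. If $\cl{C}$ is already closed then $C_n = C_{\cl{C}}$ is closed and this says $\Psi^{\dagger} \in \cl{CP}(O(M_n))$, i.e.\ $\Psi \in \cl{CP}(O(M_n))^{\dagger}$; in general one picks up the closure, giving the stated $\overline{\cl{CP}(O(M_n))}^{\dagger}$. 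One should be a little careful here that the relevant adjoint statement interacts correctly with the Choi picture — this is where Lemma~\ref{lem:left_right_choi} (or the reformulation via Proposition~\ref{prop:compose_right_cp}) is invoked to translate between $(id_n \otimes \Psi^{\dagger})$ acting on Choi matrices and composition of maps.

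For uniqueness up to the normalization $(M_n \otimes M_n)^{+} \subseteq C_n$: suppose $O(M_n)$ and $O'(M_n)$ are both super-homogeneous with $\cl{CP}(O(M_n)) = \cl{CP}(O'(M_n)) = \cl{C}$ and both have $n^{th}$ cones containing $(M_n \otimes M_n)^{+}$. By Corollary~\ref{cor:cp_on_cn}, $\Phi \in \cl{C}$ iff $(id_n \otimes \Phi)(C_n) \subseteq C_n$ iff $(id_n \otimes \Phi)(C_n') \subseteq C_n'$. The key point is that a cone $D$ with $(M_n \otimes M_n)^{+} \subseteq D$ can be recovered from the semigroup $\{\Phi : (id_n \otimes \Phi)(D) \subseteq D\}$: indeed $D = \{(id_n \otimes \Phi)(E) : \Phi \in \cl{C}\}$, since $\supseteq$ holds because $E \in (M_n\otimes M_n)^+ \subseteq D$ and $\Phi$ preserves $D$, while $\subseteq$ holds because any $X \in D$ equals $C_{\Phi}$ for some $\Phi$ with $(id_n\otimes\Phi)(D)\subseteq D$ — the map $\Phi_X := X \cdot (\text{rescaling})$... more precisely, writing $X = C_\Phi$, one checks $\Phi \in \cl{C}$ using that $(id_n \otimes \Phi)(Y)$ for $Y \in D \supseteq (M_n\otimes M_n)^+$ can be generated from $X$ by completely positive operations combined with the fact that $X \in D$. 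Thus $C_n = C_n'$, and then Proposition~\ref{prop:cn_to_os} forces $O(M_n) = O'(M_n)$. The main obstacle I expect is precisely this last recovery argument: showing that the normalization $(M_n\otimes M_n)^+ \subseteq C_n$ rigidifies $C_n$ among all operator-system $n^{th}$-cones inducing the same $\cl{CP}$-semigroup, since without it the example $\cl{CP}(OMIN_k(M_n)) = \cl{CP}(OMAX_k(M_n)) = \cl{P}_k(M_n)$ shows the bare semigroup does not determine the operator system — one has to pin down why the "largest admissible $C_n$", namely the one with $C_n^{\circ} = C_{\cl{C}^\circ}$, is the unique one containing the positive semidefinite cone.
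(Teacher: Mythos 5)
Your proof of the equivalence $(1) \Leftrightarrow (2)$ is correct and follows essentially the same route as the paper: $(2)\Rightarrow(1)$ via composition of completely positive maps plus Lemma~\ref{lem:cp_sub_p}, and $(1)\Rightarrow(2)$ by feeding $C_n := C_{\cl{C}}$ into Proposition~\ref{prop:cn_to_os} and then testing on $E = C_{id_n}$. (One small quibble: the parenthetical ``using that $\cl{C}$ is closed so $C_{\cl{C}}$ determines membership'' is both unnecessary and unavailable --- closedness is not a hypothesis here; what you actually need is just that the Choi--Jamio\l kowski map is a linear bijection, so $C_{\Lambda} \in C_{\cl{C}}$ iff $\Lambda \in \cl{C}$.)

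For the duality identity you take a genuinely different route from the paper. The paper first uses Proposition~\ref{prop:semigroup} to show that $\cl{C}^{\circ}$ is itself a semigroup and then applies Proposition~\ref{prop:semigroup} again in both directions; you instead dualize the trace pairing directly, converting $(id_n\otimes\Psi)(C_n^{\circ})\subseteq C_n^{\circ}$ into $(id_n\otimes\Psi^{\dagger})(C_n)\subseteq C_n^{\circ\circ}=\overline{C_n}$. This is valid and arguably cleaner, and the closure bookkeeping you wave at can be finished in two lines: testing on $E\in C_n$ gives $C_{\Psi^{\dagger}}\in\overline{C_n}=C_{\overline{\cl{C}}}$, so $\Psi^{\dagger}\in\overline{\cl{C}}$; conversely if $\Psi^{\dagger}\in\overline{\cl{C}}$ then, since $\overline{\cl{C}}$ is again a semigroup by continuity of composition, $(id_n\otimes\Psi^{\dagger})(C_{\Omega})=C_{\Psi^{\dagger}\circ\Omega}\in\overline{C_n}$ for every $\Omega\in\cl{C}$. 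Note that the translation between $(id_n\otimes\Psi^{\dagger})$ acting on Choi matrices and composition needs only the identity $(id_n\otimes\Psi^{\dagger})(C_{\Omega})=C_{\Psi^{\dagger}\circ\Omega}$, which is immediate from the definition; Lemma~\ref{lem:left_right_choi} is not actually required here.

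The genuine gap is exactly where you flag it: the uniqueness argument. You claim $D_n=\{(id_n\otimes\Phi)(E):\Phi\in\cl{C}\}$, and the inclusion $\supseteq$ is fine, but for $\subseteq$ you need that every $X\in D_n$ is the Choi matrix of a map in $\cl{C}=\cl{CP}(O_2(M_n))$, i.e.\ of a map $\Phi$ with $(id_n\otimes\Phi)(D_n)\subseteq D_n$. Your sketch (``$(id_n\otimes\Phi)(Y)$ can be generated from $X$ by completely positive operations'') only delivers the weaker statement $(id_n\otimes\Phi)\big((M_n\otimes M_n)^{+}\big)\subseteq D_n$: writing $Y=C_{\Psi}$ with $\Psi\in\cl{CP}(M_n)$, Lemma~\ref{lem:left_right_choi} gives $(id_n\otimes\Phi)(Y)=\big((T\circ\Psi^{\dagger}\circ T)\otimes id_n\big)(X)\in D_n$ because $T\circ\Psi^{\dagger}\circ T$ is completely positive and $D_n$ is closed under ${\rm Ad}_A\otimes id_n$. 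That shows $\Phi\in\cl{CP}(M_n,O_2(M_n))$, not $\Phi\in\cl{CP}(O_2(M_n))$; for general $Y\in D_n$ the map $\Psi$ with $Y=C_{\Psi}$ is not completely positive and the argument stalls. What is actually needed --- and what the paper asserts at the corresponding point --- is that the cone $\cl{C}'$ of all maps with Choi matrix in $D_n$ is itself a semigroup, whence $\Phi\circ\Psi\in\cl{C}'$ and $(id_n\otimes\Phi)(Y)=C_{\Phi\circ\Psi}\in D_n$, giving $\cl{C}'\subseteq\cl{CP}(O_2(M_n))=\cl{C}$ and hence $D_n\subseteq C_{\cl{C}}$. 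Your proposal neither proves this semigroup property nor substitutes an alternative, so the uniqueness claim remains open in your write-up; as it stands, the normalization $(M_n\otimes M_n)^{+}\subseteq C_n$ has only been shown to force $C_{\cl{C}}\subseteq D_n$, not equality.
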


\begin{proof}
	We first prove that $(2) \Rightarrow (1)$. Let $\{C_m\}_{m=1}^{\infty}$ be the cones associated with the operator system $O(M_n)$. If $X \in C_m$ and $\Phi,\Psi \in \cl{CP}(O(M_n))$ then $(id_m \otimes \Phi)(X) \in C_m$. But then applying $id_m \otimes \Psi$ shows $(id_m \otimes (\Psi \circ \Phi))(X) \in C_m$ as well, so it follows that $\Psi \circ \Phi \in \cl{CP}(O(M_n))$ and thus $\cl{CP}(O(M_n))$ is a semigroup. Because $O(M_n)$ is super-homogeneous, we know that ${\rm Ad}_B \in \cl{CP}(O(M_n))$ for all $B \in M_n$, and so $\cl{CP}(M_n) \subseteq \cl{CP}(O(M_n))$. To see that $\cl{CP}(O(M_n)) \subseteq \cl{P}(M_n)$, simply use Lemma~\ref{lem:cp_sub_p}.

	To see that $(1) \Rightarrow (2)$, we argue much as we did in Theorem~\ref{thm:right_cp_invariant}. It is clear, via the Choi-Jamio\l kowski isomorphism, that $S_n \subseteq C_{\cl{C}} \subseteq P_n$. Now note that $\cl{C}$ is left- and right-CP-invariant (but perhaps not a mapping cone because it may not be closed) because $\Phi \circ \Psi \in \cl{C}$ for any $\Phi \in \cl{C}$ and $\Psi \in \cl{CP}(M_n) \subseteq \cl{C}$ (and similarly for composition on the left by $\Psi \in \cl{CP}(M_n)$). Thus, if $A \in M_n$ and $\Phi \in \cl{C}$ then
	\begin{align*}
		({\rm Ad_{A} \otimes {\rm Ad}_{B}})(C_{\Phi}) & = ({\rm Ad}_A \otimes ({\rm Ad}_{B} \circ \Phi))(E) = (id_n \otimes ({\rm Ad}_{B} \circ \Phi \circ {\rm Ad}_{A^T}))(E) \in C_{\cl{C}},
	\end{align*}
	where the second equality comes from Lemma~\ref{lem:left_right_choi}. It follows from Proposition~\ref{prop:cn_to_os} that there exists a super-homogeneous operator system $O(M_n)$, defined by cones $\{C_m\}_{m=1}^{\infty}$, such that $C_n = C_{\cl{C}}$. Because $\cl{C}$ is a semigroup, it follows that $(id_n \otimes (\Phi \circ \Psi))(E) \in C_n$ for any $\Phi,\Psi \in \cl{C}$. Then $(id_n \otimes \Phi)(C_\Psi) \in C_n$, so $(id_n \otimes \Phi)(C_n) \subseteq C_n$, which implies $\cl{C} \subseteq \cl{CP}(O(M_n))$ by Corollary~\ref{cor:cp_on_cn}. To see the other inclusion, note that $id_n \in \cl{CP}(M_n)$, so $id_n \in \cl{C}$. It follows that $(id_n \otimes id_n)(E) = E \in C_n$. Thus, if $\Phi \in \cl{CP}(O(M_n))$ then $(id_n \otimes \Phi)(E) \in C_n = C_{\cl{C}}$, so $\Phi \in \cl{C}$, which implies that $\cl{C} = \cl{CP}(O(M_n))$.
	
	To see the claim about $\cl{CP}(O^{\circ}(M_n))$, suppose that $\cl{CP}(M_n) \subseteq \cl{C} \subseteq \cl{P}(M_n)$ is a closed convex cone semigroup. Then for any $\Phi,\Psi \in \cl{C}^{\circ} \subseteq \cl{CP}(M_n)$ and $\Omega \in \cl{C}$ we have $\Tr(C_{\Phi \circ \Psi}C_{\Omega}) = \Tr(C_{\Psi}C_{\Phi^{\dagger} \circ \Omega})$. We know from Proposition~\ref{prop:semigroup} that $\Phi^{\dagger} \circ \Omega = (\Omega^\dagger \circ \Phi)^{\dagger} \in (\cl{C}^{\circ})^{\dagger} \subseteq \cl{CP}(M_n)^{\dagger} = \cl{CP}(M_n) \subseteq \cl{C}$. It follows that $\Tr(C_{\Psi}C_{\Phi^{\dagger} \circ \Omega}) \geq 0$, so $\Phi \circ \Psi \in \cl{C}^{\circ}$, which implies that $\cl{C}^{\circ}$ is also a semigroup.
	
	Now by repeating our argument from earlier, we see from Proposition~\ref{prop:cn_to_os} that there is an operator system on $M_n$ defined by the cone $C_n := C_{\cl{C}^{\circ}} = C_{\cl{C}}^{\circ}$, and this is the dual operator system $O^{\circ}(M_n)$ of the operator system defined by $C_{\cl{C}}$. For any $\Phi \in \cl{C}^{\circ \circ}, \Psi \in \cl{C}^{\circ}$, we have $(id_n \otimes \Phi^{\dagger})(C_{\Psi}) = C_{\Phi^{\dagger} \circ \Psi} \in C_{\cl{C}^{\circ}}$ by Proposition~\ref{prop:semigroup}. It follows via Corollary~\ref{cor:cp_on_cn} that $(\cl{C}^{\circ \circ})^{\dagger} = \overline{\cl{C}}^{\dagger} \subseteq \cl{CP}(O^{\circ}(M_n))$. To see the other inclusion, suppose $\Phi \in \cl{CP}(O^{\circ}(M_n))$. Then $\Phi \circ \Psi \in \cl{C}^{\circ}$ for all $\Psi \in \cl{C}^{\circ}$, so Proposition~\ref{prop:semigroup} tells us that $\Phi \in (\cl{C}^{\circ \circ})^{\dagger} = \overline{\cl{C}}^{\dagger}$. It follows that $\overline{\cl{C}}^{\dagger} = \cl{CP}(O^{\circ}(M_n))$.
	
	Finally, to see the uniqueness condition, suppose that the cones $\{D_m\}_{m=1}^{\infty}$ define an operator system $O_2(M_n)$ such that $\cl{C} = \cl{CP}(O(M_n)) = \cl{CP}(O_2(M_n))$, where $O(M_n)$ is the operator system with $n^{th}$ cone $C_n := C_{\cl{C}}$ already introduced. We furthermore require that $(M_n \otimes M_n)^{+} \subseteq D_n$, and in particular that $E \in D_n$. Then $(id_n \otimes \Phi)(E) \in D_n$ for any $\Phi \in \cl{C}$, so $C_{\cl{C}} \subseteq D_n$. On the other hand by the equivalence of $(1)$ and $(2)$, $D_n = C_{\cl{C}^{\prime}}$ for some semigroup cone $\cl{C}^{\prime}$. If $\Phi \in \cl{C}^{\prime}$ then for any $X \in D_n$ there exists $\Psi \in \cl{C}^{\prime}$ such that
	\begin{align*}
		(id_n \otimes \Phi)(X) = (id_n \otimes (\Phi \circ \Psi))(E) \in D_n,
	\end{align*}
	where the inclusion comes from $\cl{C}^{\prime}$ being a semigroup. It follows via Corollary~\ref{cor:cp_on_cn} that $\cl{C}^{\prime} \subseteq \cl{CP}(O_2(M_n))$. Then $\cl{C} \subseteq \cl{C}^{\prime} \subseteq \cl{CP}(O_2(M_n)) = \cl{C}$, so $\cl{C} = \cl{C}^{\prime}$ and hence $C_n = D_n$. Uniqueness now follows from Proposition~\ref{prop:cn_to_os}.
\end{proof}

It is worth noting that if $\cl{C}$ is closed and condition (1) of Theorem~\ref{thm:cp_semigroup} holds, then $\cl{C}$ is necessarily a mapping cone. It follows that if $O(M_n)$ is a super-homogeneous operator system defined by closed cones then $\cl{CP}(O(M_n))$ is always a mapping cone (which can also be seen from Corollary~\ref{cor:mapping_cone_os}), although the converse does not hold. That is, there exist mapping cones $\cl{C}$ such that there is no operator system $O(M_n)$ with $\cl{C} = \cl{CP}(O(M_n))$ -- the simplest example being the mapping cone of completely co-positive maps.

\vspace{0.1in}

\noindent{\bf Acknowledgements.} Thanks are extended to Vern Paulsen for valuable suggestions and comments on an early draft. N.J. was supported by an NSERC Canada Graduate Scholarship and the University of Guelph Brock Scholarship.



\begin{thebibliography}{99}

\bibitem{SSZ09} {\L}. Skowronek, E. St{\o}rmer, and K. {\.Z}yczkowski, \emph{Cones of positive maps and their duality relations}, J. Math. Phys. {\bf 50} (2009), 062106.

\bibitem{S86} E. St{\o}rmer, \emph{Extension of positive maps into B(H)}, J. Funct. Anal. {\bf 66} (1986), 235--254.

\bibitem{PTT10} V. Paulsen, I. Todorov, M. Tomforde, \emph{Operator system structures on ordered spaces}, Proc. Lond. Math. Soc. (2010),
doi:10.1112/plms/pdq011.

\bibitem{Xthesis} B. Xhabli, \emph{Universal operator system structures on ordered spaces and their applications}, Ph.D. Thesis, University of Houston (2009).

\bibitem{JKPP11} N. Johnston, D. W. Kribs, V. I. Paulsen, and R. Pereira, \emph{Minimal and maximal operator spaces and operator systems in entanglement theory}, J. Funct. Anal. {\bf 260} 8 (2011), 2407--2423.

\bibitem{NC00} M. Nielsen and I. Chuang, \emph{Quantum computation and quantum information}, Cambridge University Press (2000).

\bibitem{C75} M.-D. Choi, \emph{Completely positive linear maps on complex matrices}, Linear Algebra Appl. {\bf 10} (1975), 285-–290.

\bibitem{J72} A. Jamio\l kowski, \emph{Linear transformations which preserve trace and positive semidefiniteness of operators}, Rep. Math. Phys. {\bf 3} (1972).

\bibitem{R97} R. Rockafellar, \emph{Convex analysis}, Princeton University Press (1997).

\bibitem{GY02} C.~J. Goh and X.~Q. Yang, \emph{Duality in optimization and variational inequalities}, Taylor \& Francis, London (2002).

\bibitem{HSR03} M. Horodecki, P.~W. Shor, and M.~B. Ruskai, \emph{General entanglement breaking channels}, Rev. Math. Phys. {\bf 15} (2003), 629--641.

\bibitem{S11} E. St{\o}rmer, \emph{Mapping cones of positive maps}, Math. Scand. {\bf 108} (2011), 223--232.

\bibitem{WLN} J. Watrous, \emph{Theory of quantum information} lecture notes, published electronically at \url{http://www.cs.uwaterloo.ca/~watrous/lecture-notes.html} (2004).

\bibitem{Paulsentext} V. I. Paulsen, \emph{Completely bounded maps and operator algebras},
Cambridge University Press, Cambridge (2003).

\bibitem{CE77} M.-D. Choi and E.~G. Effros, \emph{Injectivity and operator spaces}, J. Funct. Anal. {\bf 24} (1977), 156-–209.

\bibitem{J11} N. Johnston, \emph{Characterizing operations preserving separability measures via linear preserver problems}, Linear Multilinear Algebra {\bf 59} (2011), 1171--1187.

\bibitem{GB02} L. Gurvits and H. Barnum, \emph{Size of the separable neighborhood of the maximally mixed bipartite quantum state}, Los Alamos National Laboratory unclassified technical report LAUR (2002) 02-2414.

\bibitem{Sk10} {\L}. Skowronek, \emph{Cones with a mapping cone symmetry in the finite-dimensional case}, Linear Algebra Appl. {\bf 435} (2011), 361--370.

\bibitem{S09} E. St{\o}rmer, \emph{Duality of cones of positive maps}, M\"{u}nster J. Math. {\bf 2} (2009), 299--310.

\bibitem{BM10} D. Blecher and B. Magajna, \emph{Dual operator systems}, Bull. London Math. Soc. (2010), doi: 10.1112/blms/bdq103.
\end{thebibliography}
\end{document}